\documentclass[12pt,draft,reqno]{amsart}

\usepackage{amsmath,amssymb,amscd,amsfonts,amsthm}
\usepackage{graphics}
\usepackage{dsfont}
\usepackage[all]{xy}
\usepackage{enumerate}
\usepackage{dsfont}
\usepackage{bbold}
\usepackage{mathbbol}

\usepackage{array,amscd,latexsym, verbatim}


\topmargin=16pt
\textheight=610pt
\textwidth=360pt
\oddsidemargin=55pt
\evensidemargin=55pt


{\catcode`\@=11
\gdef\n@te#1#2{\leavevmode\vadjust{%
 {\setbox\z@\hbox to\z@{\strut#1}%
  \setbox\z@\hbox{\raise\dp\strutbox\box\z@}\ht\z@=\z@\dp\z@=\z@%
  #2\box\z@}}}
\gdef\leftnote#1{\n@te{\hss#1\quad}{}}
\gdef\rightnote#1{\n@te{\quad\kern-\leftskip#1\hss}{\moveright\hsize}}
\gdef\?{\FN@\qumark}
\gdef\qumark{\ifx\next"\DN@"##1"{\leftnote{\rm##1}}\else
 \DN@{\leftnote{\rm??}}\fi{\rm??}\next@}}
%



\DeclareOption{loadcyr}{\cyr@true}

\DeclareFontFamily{OT1}{wncyr}{\hyphenchar\font45 }
\DeclareFontShape{OT1}{wncyr}{m}{n}{%
   <5> <6> <7> <8> <9> gen * wncyr
   <10> <10.95> <12> <14.4> <17.28> <20.74>  <24.88>wncyr10}{}
\DeclareFontShape{OT1}{wncyr}{m}{it}{%
   <5> <6> <7> <8> <9> gen * wncyi
   <10> <10.95> <12> <14.4> <17.28> <20.74> <24.88> wncyi10}{}
\DeclareFontShape{OT1}{wncyr}{m}{sc}{%
   <5> <6> <7> <8> <9> <10> <10.95> <12> <14.4>
   <17.28> <20.74> <24.88>wncysc10}{}
\DeclareFontShape{OT1}{wncyr}{b}{n}{%
   <5> <6> <7> <8> <9> gen * wncyb
   <10> <10.95> <12> <14.4> <17.28> <20.74> <24.88>wncyb10}{}
\input cyracc.def
\def\rus{\usefont{OT1}{wncyr}{m}{n}\cyracc\fontsize{9}{11pt}\selectfont}

\DeclareMathSizes{9}{9}{7}{5}



\DeclareOption{loadcyr}{\cyr@true}

\DeclareFontFamily{OT1}{wncyr}{\hyphenchar\font45 }
\DeclareFontShape{OT1}{wncyr}{m}{n}{%
   <5> <6> <7> <8> <9> gen * wncyr
   <10> <10.95> <12> <14.4> <17.28> <20.74>  <24.88>wncyr10}{}
\DeclareFontShape{OT1}{wncyr}{m}{it}{%
   <5> <6> <7> <8> <9> gen * wncyi
   <10> <10.95> <12> <14.4> <17.28> <20.74> <24.88> wncyi10}{}
\DeclareFontShape{OT1}{wncyr}{m}{sc}{%
   <5> <6> <7> <8> <9> <10> <10.95> <12> <14.4>
   <17.28> <20.74> <24.88>wncysc10}{}
\DeclareFontShape{OT1}{wncyr}{b}{n}{%
   <5> <6> <7> <8> <9> gen * wncyb
   <10> <10.95> <12> <14.4> <17.28> <20.74> <24.88>wncyb10}{}
\input cyracc.def
\def\rus{\usefont{OT1}{wncyr}{m}{n}\cyracc\fontsize{9}{11pt}\selectfont}

\DeclareMathSizes{9}{9}{7}{5} 


\theoremstyle{plain}

\newtheorem{theorem}{Theorem}

\newtheorem{lemma}{Lemma}
\newtheorem{corollary}{Corollary}

\theoremstyle{definition}

\newtheorem*{remarknonumber}{\it Remark}

\newtheorem{nothing*}[theorem]{}
\newtheorem{subnothing*}[sub]{}

\theoremstyle{remark}


\def\bAd {{\mathbf A}\!^d}

\def\fratop{\genfrac{}{}{0pt}1}

\newcommand{\dss}{\hskip -2mm\rotatebox{68}{\raisebox{-1.8\height}{\mbox{\normalsize -\hskip .1mm-\hskip .1mm-}}}\hskip -.6mm}

\begin{document}

\title[Generic algebras]
{Generic algebras:\\ rational parametrization\\ and normal forms}

\author[Vladimir L. Popov]{Vladimir L. Popov${}^*$}
\address{Steklov Mathematical Institute,
Russian Academy of Sciences, Gub\-kina 8, Moscow
119991, Russia}

\address{National Research University\\ Higher School of Economics, Myas\-nitskaya
20, Moscow 101000,\;Russia} \email{popovvl@mi.ras.ru}

\thanks{
 ${}^*$\,Supported by
 grant {\rus RFFI
14-01-00160}.}

\begin{abstract} For every algebraically closed field  $\boldsymbol k$ of characteri\-stic different from  $2$, we prove the following:

(1)  Generic finite dimensional (not necessarily associative)
$\boldsymbol k$-algebras of a fixed dimension, considered up to isomorphism, are parametrized
by the values of a tuple of algebraically independent over $\boldsymbol k$ rational functions in the structure constants.

(2) There exists an ``algebraic normal form'', to which the set of structure constants of every such algebra can be uniquely trans\-formed
by means of passing to
its new basis, namely: there are two finite systems of nonconstant polynomials on
the space of structure constants, $\{f_i\}_{i\in I}$ and $\{b_j\}_{j\in J}$, such that
the ideal generated by the set $\{f_i\}_{i\in I}$ is prime and, for every tuple $c$ of structure constants satisfying the property
 $b_j(c)\neq 0$ for all $j\in J$, there exists a unique new basis of this algebra in which the tuple
$c'$ of its structure constants satisfies the property $f_i(c')=0$ for all $i\in I$.
\end{abstract}

\maketitle

\section{Introduction}
 Hereinafter $\boldsymbol k$ denotes an algebraically closed field  
 of arbitrary charac\-teristic different from  $2$.\;Let
 $V$ be a finite dimensional vector space over the field
$\boldsymbol k$. We put
$$n:=\dim_{\boldsymbol k} V,\qquad \mathcal M:=V^*\otimes V^*\otimes V.$$

Putting in correspondence to an element $\sum \ell\otimes \ell'\otimes v\in\mathcal M$ the bilinear map
 \begin{equation*}
 V\times V\to V,\quad (a, b)\mapsto \textstyle\sum \ell(a)\ell'(b)v,
  \end{equation*}
  we obtain a well-defined bijection between
   $\mathcal M$ and the set of all
   bilinear maps и  $ V\times V\to V$.\;This bijections commutes with the natural action
   of the group
 \begin{equation*}
G:={\rm GL}(V)
\end{equation*}
on $\mathcal M$ and its action on the the set of all bilinear maps $ V\times V\to V$
given by the formula $(g\cdot \varphi) (a, b)=g\cdot (\varphi(g^{-1}\cdot a, g^{-1}\cdot b))$.

The problem of classification of $G$-orbits in $\mathcal M$ is linked with the applications in the theory of continuous and discrete dynamical sys\-tems; see \cite{Bo}, \cite{DM}, \cite{MM}.\;In these papers, the case $n=2$ is considered (under the restriction ${\rm char}\,\boldsymbol k\neq 3$ supplementary to ${\rm char}\,\boldsymbol k\neq 2$): in \cite{Bo}, \cite{DM} symmetric, and in
\cite{MM} arbitrary generic maps are considered.\;The first main result of paper \cite{MM} is the proof of the existence of
\begin{enumerate}[\hskip 2.0mm\rm(i)]
\item a nonempty open subset  $U$ in $\mathcal M$ and
\item four
$G$-invariant rational functions $f_1$, $f_2$, $f_3$, $f_4$ on $\mathcal M$
defined everywhere on $U$
\end{enumerate}
such that every two tensors $a, b\in U$ lie in the same
$G$-orbit if and only if $f_i(a)=f_i(b)$ for all $i$.\;Thus generic $G$-orbits in $\mathcal M$
are bijectively parametrized by points of the image of rational map
\begin{equation}\label{moduli}
U\to {\bf A}\!^4,\quad m\mapsto (f_1(m), f_2(m), f_3(m), f_4(m)).
\end{equation}

The second main result of paper \cite{MM} is the proof of the existence of a  ``normal form'' to which
 every tensor in $U$ can be uniquely transformed by means of an element of $G$; the set of all normal forms is algebraic, i.e., has the appearance $U\cap \{m\in \mathcal M\mid h_i(m)=0\;\forall\;i\in I\}$ for some
 finite set $\{h_i\}_{i\in I}$ of polynomial functions on $\mathcal M$. In \cite{MM}, the set
 $U$, the functions $f_i$, the normal forms, and the elements of $G$ transforming tensors into normal forms are explicitly given by bulky formulas, and the proofs are largely based on cumbersome explicit calculations.

  According to the classical Rosenlicht theorem \cite[Thm.\,2]{Ros}, for any action of an algebraic group on an irreducible algebraic variety, orbits of points in general position are separated by a finite tuple of invariant rational functions.\;Therefore,
  leaving aside the explicit formulas, only the claim about the number of separating invariants
  ($4$, what is equal to
  $\dim\mathcal M-\dim G$) does not follow
   from Rosenlicht's theorem. This claim has the following important complements
   (a), (b), and (c), which are not noted in
\cite{MM}:

\vskip 1mm

(a) The field ${\boldsymbol k}(\mathcal M)^G$ of all $G$-invariant rational functions on $\mathcal M$
is a finite purely inseparable extension of the field ${\boldsymbol k}(f_1,\ldots, f_4)$, and for
${\rm char}\,{\boldsymbol k}=0$ these fields coincide
(this follows from \cite[Sect.\;18.2,\;Thm.]{Bor} because ${\boldsymbol k}(f_1,\ldots, f_4)$ separates
$G$-orbits of points of an open subset of $\mathcal M$).

 (b) By \cite[Prop.\;4.1]{MM}, in
  $\mathcal M$ there are points with finite
  $G$-stabilizers. Therefore, $\underset{m\in \mathcal M}{\max}\,\dim G\!\cdot\! m\!=\!\dim\,G\!=\!4$.
 Since the transcendence degree of  ${\boldsymbol k}(\mathcal M)^G$ over $\boldsymbol k$ is
  $\dim\,{\mathcal M}-\underset{m\in \mathcal M}{\max}\,\dim G\cdot m$ (see\;\cite[Sect.\;2.3, Cor.]{PV}), this means that its is equal to 4.

   (c)   The functions $f_1,\ldots, f_4$ are algebraically independent over $\boldsymbol k$,
   and therefore the image of map \eqref{moduli}
  contains a nonempty open subset of
    ${\bf A}\!^4$
   (this follows from (a) and (b)).

Property (c) is specific for the considered action: given the negative solution to the Noether problem \cite{Sa},
 for a linear action of an algebraic group, in general, it is impossible to separate orbits of points in general position by an algebraically independent system of rational invariants.

Here we show that both main results of paper \cite{MM},
together with complements (a)  and (c),
hold in a strengthened form in {\it arbitrary} dimension $n$.
Namely, we prove the following statements
(part of them is intentionally formulated in the form close to
the applications oriented papers
 \cite{Bo}, \cite{DM}, \cite{MM}).

\begin{theorem}[{{Rationality of the field of $G$-invariants of $\mathcal M$}}] \label{rat}
The field
$\boldsymbol k(\mathcal M)^G$ is a rational extension of the field $\boldsymbol k$ of transcendental degree $n^3-n^2$.
\end{theorem}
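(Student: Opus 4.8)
The plan is to prove rationality by exhibiting an explicit \emph{rational cross-section} for the generic $G$-orbits. Since $G=\mathrm{GL}(V)$ is connected, it suffices to produce a locally closed irreducible rational subvariety $S\subset\mathcal M$ that meets the orbit of each point in general position in exactly one point: then $\mu\mapsto(g(\mu),g(\mu)\cdot\mu)$ is a birational isomorphism $\mathcal M\dashrightarrow G\times S$ (with $g(\mu)\in G$ the normalizing element), whence $\boldsymbol k(\mathcal M)^G\cong\boldsymbol k(S)$. If $S$ is rational of dimension $n^3-n^2$, the theorem follows; the value of the transcendence degree is in any case forced by Rosenlicht's theorem together with the existence of points with finite stabilizer, exactly as in complements (a)--(c) of the Introduction, so that $\operatorname{td}_{\boldsymbol k}\boldsymbol k(\mathcal M)^G=\dim\mathcal M-\dim G=n^3-n^2$. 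The real content is therefore the rationality of the section.

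First I would reformulate the action. Sending $\mu\in\mathcal M$ to the linear map $\lambda_\mu\colon V\to\operatorname{End}(V)$, $v\mapsto L_v$, where $L_v(w)=\mu(v,w)$ is left multiplication, identifies $\mathcal M$ with $\operatorname{Hom}(V,\operatorname{End}(V))$, the action becoming $(g\cdot\lambda)(v)=g\,\lambda(g^{-1}v)\,g^{-1}$. The heart of the argument is then to attach to a generic $\mu$ a canonical ordered basis of $V$ by $G$-equivariant constructions that are \emph{rational} in the structure constants, scrupulously avoiding any extraction of eigenvalues (which would introduce an inseparable or $S_n$-covering and destroy both rationality and the uniqueness of the normal form). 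I would build it from canonical covariants of $\mu$: the trace covectors $t_L,t_R\in V^*$ with $t_L(a)=\operatorname{tr}(L_a)$, $t_R(a)=\operatorname{tr}(R_a)$, the symmetric form $\beta(a,b)=\operatorname{tr}(L_aL_b)\in S^2V^*$ (here $\operatorname{char}\boldsymbol k\neq2$ is used to keep the symmetric and skew covariants independent), and the associated operators. From these one produces a canonical regular operator $A\in\operatorname{End}(V)$ and a canonical cyclic covector $\ell\in V^*$; for generic $\mu$ the iterates $\ell,A^{*}\ell,\dots,(A^{*})^{n-1}\ell$ form a basis of $V^*$, dualizing to a canonical ordered basis of $V$. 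This is the analogue of the rational canonical (companion) form for a single operator, and it is exactly the device that makes the basis \emph{unique}: there is no equation to solve along the way, hence no finite ambiguity. Because $A$ and $\ell$ are $G$-equivariant, the unique $g(\mu)\in G$ carrying this basis to the standard one is a rational function of $\mu$, and the generic stabilizer is thereby seen to be trivial, which is also the normal-form assertion (2) of the abstract.

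It then remains to see that the section $S$ of normalized tensors is rational of dimension $n^3-n^2$. In the normalized basis the block $L_{e_1}$ is forced into companion form, while the conditions coming from $\ell=e_1^{*}$ are \emph{linear} in the structure constants; the defining equations of $S$ can thus be solved triangularly for one block-row in terms of the remaining free structure constants $c_{ij}^k$ with $i\ge 2$. The latter form an unconstrained affine space of dimension $(n-1)n^2=n^3-n^2$, so $S$ is rational of the required dimension and $\boldsymbol k(\mathcal M)^G\cong\boldsymbol k(S)$ is purely transcendental over $\boldsymbol k$ of degree $n^3-n^2$.

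The main obstacle I expect is the rational and \emph{unique} solvability of this normalization, which is precisely what separates the present problem from the notoriously hard rationality questions for $\mathrm{PGL}_n$ acting on tuples of matrices by simultaneous conjugation. One must choose the canonical covariants $A,\ell$ so that the resulting system of defining equations of $S$ is genuinely triangular --- each step determined linearly, with no residual finite cover of Bézout or Weyl type sneaking in --- and one must verify that all the open conditions invoked (regularity of $A$, cyclicity of $\ell$ for $A^{*}$, nondegeneracy of the auxiliary form, nonsingularity of the final linear system) hold simultaneously on a nonempty open subset. This is best settled by exhibiting one explicit $\mu$ at which every required nonvanishing holds, after which openness propagates the conclusion to all $\mu$ in general position.
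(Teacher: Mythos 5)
Your overall framework is not wrong: if one had a $G$-equivariant rational normalization $\mu\mapsto g(\mu)$ whose set $S$ of normalized tensors were itself a \emph{rational} variety, then $\mathcal M\dashrightarrow G\times S$ would be birational and $\boldsymbol k(\mathcal M)^G\cong\boldsymbol k(S)$ would finish the proof. But the entire content of the theorem is concentrated in the step you defer: exhibiting covariants $A,\ell$ for which the normalization equations are ``genuinely triangular, each step determined linearly.'' Note first that even a perfect section only gives $G\times S\approx\mathcal M={\mathbf A}^{n^3}$, i.e.\ \emph{stable} rationality of $\boldsymbol k(S)$ for free; rationality of $S$ is exactly what must be checked, and it fails for the natural choices. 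Indeed, by looking at the action of the center of $G$ one sees that $\mathcal M$ admits no nonconstant \emph{polynomial} covariants with values in $V$ or in ${\rm End}(V)$ at all; the canonical vector/operator must be something like $u=\beta^{-1}(t_L)$, $A=L_u$, with $\beta(a,b)={\rm tr}(L_aL_b)$. Normalizing $u=e_1$ and $e_1e_i=e_{i+1}$ ($i<n$) fixes the constants $c_{1i}^k$ ($i<n$) linearly, and the components $\beta(e_1,e_j)=t_L(e_j)$ with $j\geqslant2$ are linear in the remaining unknowns $c_{1n}^k$; but the $j=1$ component reads $2c_{1n}^{n-1}+(c_{1n}^n)^2=c_{1n}^n$, and eliminating the linear unknowns leaves a genuine quadratic in $c_{1n}^n$. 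So this $S$ is a generically $2{:}1$ cover of the affine space of free constants $\{c_{ij}^k,\ i\geqslant2\}$ --- precisely the ``finite cover of B\'ezout type'' you say must be avoided --- and you exhibit no choice of covariants that avoids it. This is a genuine gap, not a routine verification: the paper itself can prove only \emph{stable} rationality of $\boldsymbol k(\mathcal C)^G$ (Corollary \ref{sr}) and explicitly leaves its rationality open, which is a strong indication that no such explicit normalization is known even in the commutative case. (A smaller point: the transcendence-degree count needs generic finiteness of stabilizers for arbitrary $n$, which is not in the $n=2$ complements (a)--(c) you cite; in the paper it rests on Theorems \ref{C} and \ref{M}.)

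The paper's proof of Theorem \ref{rat} is section-free and runs in the opposite logical direction. It writes $\mathcal M=\mathcal C\oplus\mathcal A$, proves that the $G$-action on $\mathcal C$ is locally free (trivial generic stabilizers by Theorem \ref{C}, plus a separability argument), and uses that the diagonal action on $V^{\oplus n}$ is locally free with a dense orbit, so $\boldsymbol k(V^{\oplus n})^G=\boldsymbol k$. The No-Name Lemma (Lemma \ref{Nnl}), applied three times as packaged in Lemma \ref{123} with $L_1=V^{\oplus n}$, $L_2=\mathcal C$, $L_3=\mathcal A$, then ``sandwiches'' the quotient: $(\mathcal C\oplus\mathcal A)\dss G\approx(\mathcal C\dss G)\times{\mathbf A}^{\dim\mathcal A}$ and $(\mathcal C\dss G)\times{\mathbf A}^{n^2}\approx(V^{\oplus n}\dss G)\times{\mathbf A}^{\dim\mathcal C}\approx{\mathbf A}^{\dim\mathcal C}$, so rationality follows once $\dim\mathcal A\geqslant n^2$, i.e.\ $n\geqslant3$; the case $n=2$ is handled by the alternative decomposition $\mathcal M=\mathcal C_0\oplus\mathcal C'\oplus\mathcal A'$. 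The normal form you are trying to build (Theorem \ref{section}) is obtained \emph{afterwards}, from the specialness of ${\rm GL}_n$ via Theorem \ref{SS}, and the rationality of $\boldsymbol k(\mathcal S)$ is a consequence of Theorem \ref{rat}, not an ingredient of its proof. If you want to salvage your argument, replacing the unproved triangularity claim by this No-Name sandwich is the way to do it.
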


Theorem \ref{rat} and Rosenlicht's theorem \cite[Thm.\,2]{Ros} imply the following statement
(its part concerning the case  $n=1$ is clear):

\begin{corollary}[{{\rm $G$-equivalence of points in general position in $\mathcal M$}}]\label{cor1}
  Let $n\geqslant 2$.\;There exist a set $\{f_i\}_{i\in I}$ of $n^3-n^2$ algebraically independent over $\boldsymbol k$ rational $G$-invariant functions on $\mathcal M$, and a finite set $\{h_j\}_{j\in J}$ of nonconstant polynomial functions on $\mathcal M$ such that:
  \begin{enumerate}[\hskip 2.2mm\rm(i)]
\item for any points $a$ and $b$ of a dense open subset   $\{m\!\in\! \mathcal M\mid h_j(m)\!\neq\! 0\;\forall j\!\in\! J\}$ of $\mathcal M$ the conditions
    \begin{enumerate}[\hskip 4mm$\rm (a)$]
    \item
    $G\cdot a=G\cdot b$,
    \item
    $f_i(a)=f_i(b)$ for all $i\in I$
    \end{enumerate}
    are equivalent;
\item the set $\{f_i\}_{i\in I}$ generates the field $\boldsymbol k(\mathcal M)^G$
over  $\boldsymbol k$.
\end{enumerate}
If $n=1$, then  $\boldsymbol k(\mathcal M)^G=\boldsymbol k$, and
$\mathcal M\setminus \{0\}$ is a single $G$-orbit.
\end{corollary}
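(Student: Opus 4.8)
The plan is to read the statement off from Theorem~\ref{rat} together with Rosenlicht's theorem \cite[Thm.\,2]{Ros}, the only real work being the passage from separation by the \emph{field} $\boldsymbol k(\mathcal M)^G$ to separation by an explicit finite, algebraically independent \emph{tuple} of invariants. I would first dispose of property (ii) and the independence assertion: for $n\geqslant 2$, Theorem~\ref{rat} says that $\boldsymbol k(\mathcal M)^G$ is a purely transcendental extension of $\boldsymbol k$ of transcendence degree $n^3-n^2>0$, so it admits a transcendence basis $f_1,\dots,f_{n^3-n^2}$ that simultaneously \emph{generates} it over $\boldsymbol k$. These functions are $G$-invariant, algebraically independent over $\boldsymbol k$, and generate $\boldsymbol k(\mathcal M)^G$, which is precisely (ii). It then remains only to manufacture the polynomials $h_j$ and to verify the equivalence (i).

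For (i) I would invoke Rosenlicht's theorem in the form used in the Introduction: there exist a dense open $G$-stable subset $W\subseteq\mathcal M$ and finitely many $G$-invariant rational functions $\varphi_1,\dots,\varphi_r$, regular on $W$, separating the $G$-orbits contained in $W$. Since the $f_i$ generate $\boldsymbol k(\mathcal M)^G$, each $\varphi_s$ is a rational function $\varphi_s=R_s(f_1,\dots,f_{n^3-n^2})$ of them. Hence on the dense open subset $U_1\subseteq W$ on which all $f_i$ are regular and each $R_s$ is defined at $(f_1(m),\dots,f_{n^3-n^2}(m))$, the implication $f_i(a)=f_i(b)\ \forall i\Rightarrow\varphi_s(a)=\varphi_s(b)\ \forall s$ holds, whence $G\cdot a=G\cdot b$; the reverse implication is trivial, as the $f_i$ are $G$-invariant. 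Thus on $U_1$ conditions (a) and (b) are equivalent. Now choose $h\in\boldsymbol k[\mathcal M]$ with $\varnothing\neq\{h\neq 0\}\subseteq U_1$ (possible since $\mathcal M$ is irreducible and $U_1$ is a nonempty open set), and let $\{h_j\}_{j\in J}$ be the irreducible factors of $h$; then $\{m\mid h_j(m)\neq 0\ \forall j\}$ is the dense principal open set $\{h\neq 0\}$, which yields (i). Adjoining, if necessary, one extra nonconstant factor only shrinks this set and so cannot destroy (i), so the $h_j$ may be taken nonconstant.

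The step I expect to be the main obstacle is the careful bookkeeping of domains of definition in this passage from the field to the finite tuple: the crucial implication $f_i(a)=f_i(b)\ \forall i\Rightarrow G\cdot a=G\cdot b$ relies on \emph{every} invariant rational function lying in $\boldsymbol k(f_1,\dots,f_{n^3-n^2})$, so one must restrict to a dense open locus on which all the $f_i$ and all the $R_s\circ(f_1,\dots,f_{n^3-n^2})$ are simultaneously regular, and packaging this locus as a single principal open set $\{h\neq 0\}$ is exactly what produces the $h_j$. Finally, the case $n=1$ is a direct computation: here $\dim\mathcal M=1$ and $G={\rm GL}(V)=\boldsymbol k^\times$ acts on $\mathcal M\cong\boldsymbol k$ through the nontrivial character $t\mapsto t^{-1}$, so $\mathcal M\setminus\{0\}$ is a single orbit, and every invariant rational function, being constant on this dense orbit, lies in $\boldsymbol k$, i.e.\ $\boldsymbol k(\mathcal M)^G=\boldsymbol k$.
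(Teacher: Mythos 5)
Your proposal is correct and follows essentially the same route as the paper, which states Corollary~\ref{cor1} as an immediate consequence of Theorem~\ref{rat} (providing the $n^3-n^2$ algebraically independent generators $f_i$ of $\boldsymbol k(\mathcal M)^G$) and Rosenlicht's theorem \cite[Thm.\,2]{Ros} (providing orbit separation in general position), with the $n=1$ case dismissed as clear. Your write-up merely fills in the bookkeeping of domains of definition and the packaging of the open locus as $\{m\mid h_j(m)\neq 0\;\forall j\in J\}$, which the paper leaves implicit.
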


\begin{theorem}[{{\rm The existence of normal forms for $\mathcal M$}}]\label{section} There exist two
finite sets
 $\{s_p\}_{p\in P}$ and $\{t_q\}_{q\in Q}$ of nonconstant polynomial functions on
 $\mathcal M$, such that:
\begin{enumerate}[\hskip 2.2mm\rm(i)]
\item the closed subset $\mathcal S\!:=\!\{m\!\in\! \mathcal M\mid s_p(m)\!=\!0\,\forall p\!\in\! P\}$ is irreducible, and the map $\boldsymbol k(\mathcal M)^G\to \boldsymbol k(\mathcal S)$, $f\mapsto f|_{\mathcal S}$ is a well-defined $\boldsymbol k$-isomorphism of fields;
    \item
 for every point $a$ of the open dense subset $\{m\!\in\! \mathcal M\mid t_q(m)\!\neq\! 0\;\forall q\!\in\! Q\}$ of $\mathcal M$, there exists a unique element $g\in G$, for which
$g\cdot a\in\mathcal S$.
\end{enumerate}
\end{theorem}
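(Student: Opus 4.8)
The plan is to realize $\mathcal{S}$ as (the closure of) the image of a rational section of the quotient map, exploiting that $G=\mathrm{GL}(V)$ is a special group. The first and decisive step is to prove \emph{generic freeness}: the stabilizer in $G$ of a generic $\varphi\in\mathcal{M}$ is trivial (equivalently, the generic $n$-dimensional algebra has no nonidentity automorphisms). By Theorem \ref{rat} the generic orbit has dimension $n^3-(n^3-n^2)=n^2=\dim G$, so these stabilizers are already known to be finite; to upgrade ``finite'' to ``trivial'' I would bound the incidence variety $Z:=\{(\varphi,g)\in\mathcal{M}\times(G\setminus\{e\}) : g\cdot\varphi=\varphi\}$. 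Stratifying $G\setminus\{e\}$ by conjugacy type and computing $\dim\mathcal{M}^g$ from the eigenvalue relations $\lambda_l=\lambda_i\lambda_j$ (and, for non-semisimple $g$, from the Jordan structure of the induced operator on $\mathcal{M}$), one verifies $\dim(\text{stratum})+\dim\mathcal{M}^g<n^3$ for every nonidentity type, whence $\overline{\mathrm{pr}_{\mathcal{M}}(Z)}\subsetneq\mathcal{M}$ and the generic stabilizer is trivial. In positive characteristic I would additionally check that the orbit map is separable, so that the stabilizer group scheme itself, not merely its reduced points, is trivial generically.

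Next, by Rosenlicht's theorem there is a dense open $G$-stable $U\subseteq\mathcal{M}$ carrying a geometric quotient $\pi:U\to Y$ with $\boldsymbol{k}(Y)=\boldsymbol{k}(\mathcal{M})^G$ and with fibres equal to orbits; after shrinking $U$ I may assume the action on $U$ is free. The generic fibre of $\pi$ is then a torsor under $G_{\boldsymbol{k}(Y)}=\mathrm{GL}_{n,\boldsymbol{k}(Y)}$. Since $\mathrm{GL}_n$ is special (every $\mathrm{GL}_n$-torsor over a field is trivial, being just a rank-$n$ vector space), this torsor has a $\boldsymbol{k}(Y)$-rational point, i.e.\ $\pi$ admits a rational section $\sigma:Y\dashrightarrow\mathcal{M}$. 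Put $\mathcal{S}:=\overline{\mathrm{im}\,\sigma}$; it is irreducible, being the closure of the image of the irreducible $Y$, and $\pi|_{\mathcal{S}}:\mathcal{S}\to Y$ is birational with rational inverse $\sigma$. Its comorphism is $f\mapsto f|_{\mathcal{S}}$, which therefore gives the asserted $\boldsymbol{k}$-isomorphism $\boldsymbol{k}(\mathcal{M})^G=\boldsymbol{k}(Y)\xrightarrow{\sim}\boldsymbol{k}(\mathcal{S})$. Taking $\{s_p\}_{p\in P}$ to be a finite generating set of the prime ideal $I(\mathcal{S})$ makes $\mathcal{S}=\{m : s_p(m)=0\ \forall p\}$ irreducible with the $s_p$ nonconstant (nonconstant because $\dim\mathcal{S}=n^3-n^2$, so $\emptyset\neq\mathcal{S}\subsetneq\mathcal{M}$); this yields (i).

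For (ii), birationality of $\pi|_{\mathcal{S}}$ means that over a dense open $Y_0\subseteq Y$ each fibre of $\pi$, that is, each of these generic orbits, meets $\mathcal{S}$ in exactly one point. Hence for $a$ in the dense open $G$-stable set $\mathcal{M}_0:=\pi^{-1}(Y_0)$ intersected with the free locus, the set $\{g\in G : g\cdot a\in\mathcal{S}\}$ is nonempty and, by freeness together with the single-point intersection, a singleton. Finally, since $\mathcal{M}_0$ is dense and open in the affine space $\mathcal{M}$, it contains a nonempty basic open subset, so I may choose finitely many nonconstant polynomials $\{t_q\}_{q\in Q}$ with $\emptyset\neq\{m : t_q(m)\neq 0\ \forall q\}\subseteq\mathcal{M}_0$; this gives (ii).

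The main obstacle is the generic-freeness step: the uniform estimate $\dim(\text{stratum})+\dim\mathcal{M}^g<n^3$ must be carried out across all conjugacy types, the extremal cases being those $g$ having the eigenvalue $1$ with high multiplicity (where $\mathcal{M}^g$ is largest), and in positive characteristic the separability of the orbit map requires separate verification. Everything after that is formal, given the specialness of $\mathrm{GL}_n$ and Rosenlicht's theorem.
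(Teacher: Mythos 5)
Your overall skeleton coincides exactly with the paper's route: generic local freeness plus specialness of $\mathrm{GL}_n$, then Rosenlicht's theorem, a torsor structure on the generic fibre of the rational quotient, a rational section $\sigma$, and $\mathcal S:=\overline{\sigma(Y)}$ (this is precisely the paper's Theorem \ref{SS}); your derivations of (i) and (ii) from the section, including the basic-open-set bookkeeping for the $t_q$, are the same as the paper's. The problem is that what you yourself call ``the first and decisive step'' is not carried out, and it is exactly where all of the paper's work lies.

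Two things are missing. First, triviality of the generic stabilizer: your incidence-variety plan (``one verifies $\dim(\mathrm{stratum})+\dim\mathcal M^{g}<n^{3}$ for every nonidentity type'') is a program, not a proof. In the paper this is Theorem \ref{M}, deduced from Theorem \ref{C}, whose proof occupies most of Section 2 and requires both the $\alpha$-series combinatorics of torus weights for semisimple elements and, in characteristic $p>0$, Ralley's theorem on tensor products of modular representations of a cyclic $p$-group to bound $\dim\mathcal C^{u}$ for unipotent $u$ --- precisely the ``extremal cases'' you flag but do not settle; note that Theorem \ref{M} is proved before Theorem \ref{section} in the paper's logical order, so you could simply have cited it instead of re-sketching it. Second, and fatal to your argument in positive characteristic: without separability of the orbit morphism, trivial (reduced) stabilizers do \emph{not} make the generic fibre of the Rosenlicht quotient a $G$-torsor, so specialness cannot be invoked at all; your sentence ``I would additionally check that the orbit map is separable'' is again a promise, not an argument. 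The paper closes this gap concretely: the set $\{m\in\mathcal M\mid \dim\mathfrak g\cdot m\geqslant d\}$ is open, and at the single explicit point $c=\ell^{1}\otimes\ell^{1}\otimes e_{1}+\cdots+\ell^{n}\otimes\ell^{n}\otimes e_{n}$ the vectors $x_{ij}\cdot c$ computed in \eqref{final} are linearly independent, so $\dim\mathfrak g\cdot c=n^{2}=\dim G$, giving separability generically. As written, your proposal proves the theorem in characteristic $0$ only modulo a citation of Theorem \ref{M}, and does not prove it in characteristic $p>0$.
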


Actually, we use another interpretation of elements of $\mathcal M$, consi\-de\-ring them as $\boldsymbol k$-algebra structures
(not necessarily associative) on $V$ in the sense of Bourbaki, i.e., ring structures, for which  $\boldsymbol k$ is the domain of external operators; see\;\cite{Bour}, \cite{Sch}.\;Fixing such a structure on $V$ is equivalent to fixing a bilinear map $\varphi\colon V\times V\to V$ that defines the product of elements  $a, b\in V$ by formula $ab:=\varphi(a, b)$.\;This gives a bijection between  $\mathcal M$ and the set of all such structures, which assigns to an element
 $m=\sum \ell\otimes \ell'\otimes v\in \mathcal M$ the $\boldsymbol k$-algebra structure on $V$ such that
 the product of elements $a, b\in V$ is defined by the formula
 \begin{equation}
\label{mult}
ab:=\sum \ell(a)\ell'(b)v.
\end{equation}
 We denote this $\boldsymbol k$-algebra by  $\{V, m\}$ and call  elements of the space  $\mathcal M$ {\it multiplications}.
For every element $g\in G$ and every $\boldsymbol k$-algebra $\{V, m\}$, the map $\{V, m\}\to \{V, g\cdot m\}$, $a\mapsto g\cdot a$ is an isomorphism of $\boldsymbol k$-algebras and every isomorphism between  $\boldsymbol k$-algebras is obtained in this way. Thus,
 $\boldsymbol k$-algebras $\{V, a\}$ and $\{V, b\}$ are isomorphic if and only if
 $G\cdot a= G\cdot b$, and the automorphism group of the
$\boldsymbol k$-algebra
$\{V, m\}$ is the $G$-stabilizer of the multiplication $m\in \mathcal M$.

  Let $\theta\in{\rm GL}(\mathcal M)$ be the involution induced by permuting the first two factors in $V^*\otimes V^*\otimes V$:
\begin{equation*}
\theta\colon\mathcal M\to \mathcal M,\quad \ell\otimes\ell'\otimes v\mapsto \ell'\otimes\ell\otimes v.
\end{equation*}
Since ${\rm char}\,{\boldsymbol k}\neq 2$, every multiplication $m\in \mathcal M$ can be written in the form
$m=\tfrac{1}{2}\big(m+\theta(m)\big)+\tfrac{1}{2}\big(m-\theta(m)\big)$, which implies that
\begin{equation}
\left.\begin{array}{ll}
\mathcal M&\hskip -2mm=\mathcal C\oplus\mathcal A,\quad\mbox{где}\\[3pt]
\mathcal C:&\hskip -2mm=\{m\in\mathcal M\mid \theta(m)=m\},\\[3pt]
\mathcal A:&\hskip -2mm=\{m\in\mathcal M\mid \theta(m)=-m\}.
\end{array}\right\}
\label{decomp}
\end{equation}
 The algebra $\{V, m\}$ is commutative, i.e.,\;$ab=ba$ in \eqref{mult} (respectively, anticommutative, i.e.,\;$ab=-ba$ в \eqref{mult}) if and only if
$m\in \mathcal C$ (respectively, $m\in \mathcal A$).\;The subspaces $\mathcal C$ and $\mathcal A$ in
$\mathcal M$ are $G$-invariant.

The proofs of Theorems \ref{rat} and \ref{section} are based on the triviality claims in the following two theorems:

\begin{theorem}\label{C} For points
   $m$ in general position in  $\mathcal C$, the algebras $\{V, m\}$ are simple commutative algebras with trivial automorphism group.
\end{theorem}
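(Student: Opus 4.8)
The plan is to prove that each of the three asserted properties holds on a nonempty — and hence, since $\mathcal{C}$ is a linear subspace of $\mathcal{M}$ and therefore irreducible, dense — open subset of $\mathcal{C}$; the intersection of finitely many dense open subsets is again dense open, and this is precisely what ``general position'' means. Commutativity is automatic for $m\in\mathcal{C}$, so only two things remain: simplicity of $\{V,m\}$ and triviality of its automorphism group $G_m=\mathrm{Aut}(\{V,m\})$. In both cases I would combine an openness (or semicontinuity) statement with the production of a single witnessing multiplication.

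For simplicity the plan is to show that the non-simple locus is \emph{closed}. For each $d$ with $1\le d\le n-1$ I would form the incidence variety $\Gamma_d=\{(m,W)\in\mathcal{C}\times\mathrm{Gr}(d,V)\mid \varphi_m(V,W)\subseteq W\}$, where commutativity makes one-sided and two-sided ideals coincide. The defining condition is closed, and because the Grassmannian $\mathrm{Gr}(d,V)$ is complete the projection $\Gamma_d\to\mathcal{C}$ is closed; its image is the set of $m$ possessing a $d$-dimensional ideal. The union over $d$ is the non-simple locus, hence closed, so the simple locus is open, and it suffices to exhibit one simple commutative algebra.

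For triviality of $G_m$ I would argue through idempotents. The idempotents of $\{V,m\}$ are the solutions of $\varphi_m(x,x)=x$, a system of $n$ quadrics; outside a proper closed set there are no solutions at infinity (i.e. $\varphi_m(x,x)=0$ forces $x=0$), so by B\'ezout there are exactly $2^n$ solutions, all reduced, and for generic $m$ the $2^n-1$ nonzero ones span $V$. Call this dense open set $\mathcal{C}_{\mathrm{nice}}$. On it every automorphism permutes the finite idempotent set, and since that set spans $V$ the automorphism is \emph{determined} by the induced permutation; writing $g=U_\sigma U^{-1}$ in terms of an idempotent frame $U$ shows that a nonidentity automorphism stays invertible and bounded in $G$ and cannot degenerate. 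Consequently $G_m$ is finite, the incidence variety $\mathcal{J}=\{(m,g)\mid m\in\mathcal{C}_{\mathrm{nice}},\ g\in G_m\setminus\{e\}\}$ is finite over $\mathcal{C}_{\mathrm{nice}}$, and its image is closed; exhibiting a single $m_0\in\mathcal{C}_{\mathrm{nice}}$ with $G_{m_0}=\{e\}$ then shows this image is proper, so its complement is the desired dense open set on which $G_m$ is trivial.

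The whole argument therefore reduces to one construction, which I expect to be the main obstacle: for every $n$, an explicit simple commutative algebra of dimension $n$ lying in $\mathcal{C}_{\mathrm{nice}}$ and having no nontrivial automorphism. I would build it by an anchoring device: choose the structure constants so that some distinguished idempotent $u$ has left-multiplication operator $L_u$ with $n$ distinct eigenvalues having the chosen basis as eigenvectors, so that any automorphism fixing $u$ is forced to be diagonal; arrange the remaining products so that $u$ is the \emph{only} idempotent with simple spectrum (hence canonical, hence fixed by every automorphism), so that the sole diagonal automorphism is the identity, and so that no proper subset of eigenlines is closed under multiplication (simplicity); and finally verify that the configuration is generic enough to contain the full spanning set of $2^n-1$ idempotents. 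The delicate point is to meet all of these requirements \emph{simultaneously} in arbitrary dimension, and to confirm the determinacy used above — that on $\mathcal{C}_{\mathrm{nice}}$ an automorphism is genuinely pinned down by its action on the idempotents — since it is exactly this rigidity (and not a bare semicontinuity argument) that rules out nontrivial automorphisms degenerating onto the witness and thereby lets a single example propagate to a dense open set.
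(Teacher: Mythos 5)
Your argument has a genuine gap, and it is the one you yourself flag at the end: everything is reduced to a witness that is never constructed. Both halves of your proof have the shape ``the bad locus is closed, so a single good point makes its complement dense open,'' and so both halves are conditional on exhibiting, for \emph{every} $n$, an explicit commutative $n$-dimensional algebra that is simple, lies in $\mathcal{C}_{\mathrm{nice}}$ (all $2^n-1$ nonzero idempotents reduced and spanning), and has trivial automorphism group. The only explicit algebra in sight, the diagonal one $e_ie_j=\delta_{ij}e_i$, fails this: its automorphism group is the symmetric group on $e_1,\ldots,e_n$, so it certifies \emph{finiteness} of generic stabilizers but not triviality. Producing the required witness in arbitrary dimension is not a routine verification to be deferred --- it is essentially the crux of the theorem, and the paper's proof is organized precisely so as to never need one. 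For simplicity the paper does a dimension count: with $\mathcal{I}_d=G\cdot\mathcal{L}_d$ and $P_d$ the parabolic stabilizer of $V_d$, Lemma \ref{tran} gives $\dim\mathcal{I}_d\leqslant \dim G+\dim\mathcal{L}_d-\dim P_d<\dim\mathcal{C}$ for $d<n$, so no simple witness is needed at all. For automorphisms the paper uses the Jordan decomposition: a nontrivial algebraic subgroup of $G$ contains a nontrivial semisimple or a nontrivial unipotent element, and each type is excluded on a dense open set by dimension counting --- torus weight combinatorics (the $\alpha$-series) for semisimple elements, and, for unipotent elements, finiteness of generic stabilizers in characteristic $0$ together with modular representation theory of cyclic $p$-groups in characteristic $p$. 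The only ``example'' ever needed is the diagonal algebra, and only for finiteness.

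The parts of your argument that precede the missing witness are sound and genuinely different from the paper, which is worth recording. The incidence-variety argument over the Grassmannian gives closedness of the non-simple locus more cleanly than the paper's parabolic-subgroup argument (though it, too, then needs a simple commutative witness, e.g.\ a spin-factor Jordan algebra for $n\geqslant 3$, which you do not supply). More importantly, your idempotent-rigidity idea is the correct response to the subtlety the paper explicitly warns about (citing \cite[Sect.\;6.1, Example\;1]{PV}): triviality of the stabilizer at one point does \emph{not} in general propagate to points in general position, and your claim that it does propagate inside $\mathcal{C}_{\mathrm{nice}}$ rests on the assertion that a family of nonidentity automorphisms, each inducing a nontrivial permutation of a spanning set of reduced idempotents, cannot degenerate to the identity. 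That assertion is plausibly correct, but as written it is heuristic: ``bounded in $G$'' has no meaning over an arbitrary algebraically closed field, the idempotents admit only an \'etale-local labelling (monodromy must be dealt with), and the closedness of $\{m\in\mathcal{C}_{\mathrm{nice}}\mid G_m\neq\{e\}\}$ should be proved by a valuative or \'etale-cover argument. Even with that repaired, your proof establishes only a conditional statement: the theorem holds \emph{provided} the anchoring construction exists --- and that construction is exactly what is absent.
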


The analogous statement holds for  $\mathcal M$:

 \begin{theorem}\label{M}
    For points $m$ in general position in $\mathcal M$, the algebras  $\{V, m\}$ are simple and have trivial automorphism group.
\end{theorem}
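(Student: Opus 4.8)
The plan is to treat the two conclusions---simplicity and triviality of the automorphism group---separately, to exhibit for each a dense open subset of $\mathcal M$ on which it holds, and then to intersect these subsets. Throughout I use that $\mathrm{Aut}\,\{V,m\}=G_m$, that $\mathcal M$ is irreducible, and that $G\to\mathrm{GL}(\mathcal M)$ is faithful.

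\emph{Simplicity is a generic condition.} I would first show that the non-simple locus is closed. For a fixed subspace $W\subseteq V$ the condition that $W$ be a two-sided ideal of $\{V,m\}$, namely $\varphi(V,W)+\varphi(W,V)\subseteq W$, is linear in $m$; hence for each $d$ with $1\leqslant d\leqslant n-1$ the incidence set $\{(W,m)\in\mathrm{Gr}(d,V)\times\mathcal M\mid W\ \text{is an ideal of}\ \{V,m\}\}$ is closed, and since $\mathrm{Gr}(d,V)$ is complete its image in $\mathcal M$ is closed. The union of these images over $d$, together with the closed locus $\{m\mid \varphi(V,V)\neq V\}$ (which records the ideal $\{V,m\}^2$ when it is proper, and the case $m=0$), is exactly the non-simple locus, so its complement is open. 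That it is nonempty I would deduce from Theorem \ref{C}: a generic $m\in\mathcal C$ gives a simple algebra, so the open simple locus meets $\mathcal C$ and is therefore dense in $\mathcal M$.

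\emph{Trivial automorphism group via a dimension count.} I would bound the bad locus $N:=\{m\mid G_m\neq\{e\}\}$. Set $Y:=\{(g,m)\in G\times\mathcal M\mid g\cdot m=m\}$, a closed set, and $Y^{\circ}:=Y\setminus(\{e\}\times\mathcal M)$; then $N=\mathrm{pr}_{\mathcal M}(Y^{\circ})$, and it suffices to prove $\dim Y^{\circ}<n^{3}=\dim\mathcal M$, for then $\overline N\neq\mathcal M$ and the automorphism group is trivial on the dense open set $\mathcal M\setminus\overline N$. Fibering $Y$ over $G$, the fiber over $g$ is the fixed subspace $\mathcal M^{g}$; for diagonalizable $g$ with eigenvalues $\mu_{1},\dots,\mu_{n}$ on $V$ one reads off, in the monomial basis of structure constants, that $\dim\mathcal M^{g}$ is the number of triples $(i,j,k)$ with $\mu_{i}\mu_{j}=\mu_{k}$. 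Faithfulness gives $\dim\mathcal M^{g}<n^{3}$ for every $g\neq e$, and summing along the stratification of $G$ by conjugacy-and-eigenvalue type yields $\dim Y^{\circ}=\sup_{g\neq e}\bigl(\dim C_{g}+\dim\mathcal M^{g}\bigr)$, where $C_{g}$ is the stratum through $g$.

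\emph{The main obstacle} is the uniform estimate $\dim C_{g}+\dim\mathcal M^{g}<n^{3}$ for all $g\neq e$. I expect the extremal stratum to be the near-identity one, $g=\mathrm{diag}(1,\dots,1,\mu)$ with $\mu\neq1$: here $\dim\mathcal M^{g}=(n-1)^{3}+2(n-1)$ and $\dim C_{g}=2n-1$, so the contribution is $n^{3}-(3n-4)(n-1)$, which is strictly less than $n^{3}$ for $n\geqslant2$. To finish one must check this is the maximum over all types: first discard non-semisimple $g$ by a semicontinuity/closure argument (their contributions do not exceed those of their semisimple specializations), then optimize the explicit function of the eigenvalue multiplicities $(d_{1},\dots,d_{r})$ and verify that splitting off a single distinct eigenvalue dominates every coarser partition. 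Granting this, intersecting $\mathcal M\setminus\overline N$ with the simple locus produces the desired dense open set; the case $n=1$ is immediate. (An alternative route to the weaker fact that $G_m$ is finite for generic $m$ is that $\dim\mathrm{Der}\,\{V,m\}=\dim\mathrm{Lie}(G_m)$ is upper semicontinuous, so one example without nonzero derivations suffices; the dimension count above is what upgrades finiteness to triviality.)
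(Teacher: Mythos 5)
The first half of your proposal (simplicity) is correct, and in fact more self-contained than the paper's treatment, which simply cites \cite[Thm.\,4]{Po}: openness of the simple locus via the incidence variety over the Grassmannian, plus nonemptiness via Theorem~\ref{C} and irreducibility of $\mathcal M$, is a valid argument. The genuine gap is in the second half. Your dimension count over $Y^{\circ}$ hinges on the uniform estimate $\dim C_{g}+\dim\mathcal M^{g}<n^{3}$ for all $g\neq e$, which you yourself flag as ``the main obstacle'' and do not prove; worse, the proposed reduction to semisimple $g$ breaks down at exactly the hard point. The semisimple part of a nonidentity unipotent element $u$ is $e$, which is excluded from $Y^{\circ}$, so semicontinuity of fixed-space dimension only yields the useless bound $\dim\mathcal M^{u}\leqslant\dim\mathcal M^{e}=n^{3}$, while $\dim C_{u}$ can be as large as $n^{2}-n$ (regular unipotent class). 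Since the paper works in arbitrary characteristic $\neq 2$, finite stabilizers can genuinely contain unipotent elements of order $p$; bounding fixed spaces of unipotents is precisely the hardest part of the paper's proof of Theorem~\ref{C} (its Parts 4--5, which require Ralley's bound \eqref{min} on tensor products of indecomposable modular representations and the rank bound on centralizers). In addition, for semisimple $g$ your strata must record not just eigenvalue multiplicities but all multiplicative relations $\mu_{i}\mu_{j}=\mu_{k}$ among the eigenvalues (these change $\dim\mathcal M^{g}$), and the verification that the near-identity type dominates every such pattern is a nontrivial optimization you have not carried out.

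What you miss is that once Theorem~\ref{C} is granted --- and you already invoke it for the simplicity half --- triviality of generic stabilizers on $\mathcal M$ is immediate, with no new dimension count. By \eqref{decomp}, $\mathcal M=\mathcal C\oplus\mathcal A$ is a decomposition into $G$-submodules, so an element $g$ fixing $m=c+a$ (with $c\in\mathcal C$, $a\in\mathcal A$) fixes both components; hence $G_{m}\subseteq G_{c}$. The projection $\mathcal M\to\mathcal C$, $m\mapsto c$, is a surjective $G$-equivariant linear map, so the preimage of the dense open subset of $\mathcal C$ on which $G_{c}=\{e\}$ (Theorem~\ref{C}) is a dense open subset of $\mathcal M$ on which $G_{m}=\{e\}$. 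This is exactly the paper's one-line proof (``triviality follows from Theorem~\ref{C} in view of the first equality in \eqref{decomp}''), and it is what should replace your unfinished stratification argument.
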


Theorems  \ref{rat}, \ref{section}, and \ref{M} imply

\begin{corollary} \label{param1}
There exists a system of simple pairwise nonisomorphic  $n$-dimen\-sio\-nal
$\boldsymbol k$-algebras rationally parametrized by
$n^3-n^2$ parameters algebraically independent over $\boldsymbol k$.
\end{corollary}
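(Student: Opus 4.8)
The plan is to use the section $\mathcal S$ furnished by Theorem \ref{section} as the parameter variety and to read off the algebras $\{V, m\}$, $m\in\mathcal S$, as the desired system. First I would record the dimension count: $\dim\mathcal M=n^3$ and $\dim G=n^2$, so Theorem \ref{rat} asserts that $\boldsymbol k(\mathcal M)^G$ is a purely transcendental extension of $\boldsymbol k$ of transcendence degree $n^3-n^2$. By Theorem \ref{section}(i) the restriction map $\boldsymbol k(\mathcal M)^G\to\boldsymbol k(\mathcal S)$ is a $\boldsymbol k$-isomorphism, so $\boldsymbol k(\mathcal S)$ is itself purely transcendental over $\boldsymbol k$ of transcendence degree $n^3-n^2$; that is, the irreducible variety $\mathcal S$ is rational of dimension $n^3-n^2$. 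Hence there is a birational isomorphism $\pi$ from $\mathbf{A}^{n^3-n^2}$ onto $\mathcal S$, regular and an isomorphism onto a dense open subset of $\mathcal S$ over some dense open $W\subseteq\mathbf{A}^{n^3-n^2}$. For $w\in W$ the multiplication $m_w:=\pi(w)\in\mathcal S\subseteq\mathcal M$ has structure constants equal to its coordinates in a fixed basis of $\mathcal M$, and these are rational functions of $w=(w_1,\dots,w_{n^3-n^2})$, whose entries are algebraically independent over $\boldsymbol k$. This is precisely the sought rational parametrization by $n^3-n^2$ algebraically independent parameters.

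Next I would cut $\mathcal S$ down to the locus on which the algebras behave as claimed. By Theorem \ref{M} the set $\mathcal M^\circ$ of those $m\in\mathcal M$ for which $\{V,m\}$ is simple with trivial automorphism group contains a dense open subset; since simplicity and triviality of the automorphism group are isomorphism invariants and $G$-orbits are the isomorphism classes, $\mathcal M^\circ$ is $G$-invariant. The section $\mathcal S$ meets the generic orbits by Theorem \ref{section}(ii), generic orbits are simple by Theorem \ref{M}, and $\mathcal M^\circ$ is $G$-invariant; consequently $\mathcal S\cap\mathcal M^\circ$ is nonempty, hence dense open in the irreducible $\mathcal S$. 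Intersecting with the image of $\pi$ and with the open set $\{m\in\mathcal M\mid t_q(m)\neq 0\;\forall q\}$ of Theorem \ref{section}(ii), I obtain a dense open $\mathcal S^\circ\subseteq\mathcal S$ all of whose points yield simple algebras and to which the cross-section property applies.

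For pairwise non-isomorphism I would argue directly from the uniqueness in Theorem \ref{section}(ii). Suppose $m,m'\in\mathcal S^\circ$ satisfy $\{V,m\}\cong\{V,m'\}$, i.e. $m'=g\cdot m$ for some $g\in G$. Applying Theorem \ref{section}(ii) to the point $m$, the identity $e\in G$ gives $e\cdot m=m\in\mathcal S$, while also $g\cdot m=m'\in\mathcal S$; uniqueness of the transporting element forces $g=e$, so $m'=m$. Thus distinct points of $\mathcal S^\circ$ give non-isomorphic algebras. Pulling everything back along $\pi$ to the dense open $W^\circ:=\pi^{-1}(\mathcal S^\circ)\subseteq W$, the family $\{V,m_w\}_{w\in W^\circ}$ consists of simple, pairwise non-isomorphic $n$-dimensional $\boldsymbol k$-algebras whose structure constants are rational functions of the $n^3-n^2$ algebraically independent parameters $w_1,\dots,w_{n^3-n^2}$, as required.

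I do not expect a serious obstacle: the corollary is essentially a synthesis of the three theorems. The only steps demanding care are verifying that the several dense open subsets—coming from simplicity (Theorem \ref{M}), from the cross-section property (Theorem \ref{section}(ii)), and from the domain of the birational $\pi$ (rationality via Theorems \ref{rat} and \ref{section}(i))—have a common dense open intersection inside the irreducible variety $\mathcal S$, together with the short uniqueness argument establishing non-isomorphism.
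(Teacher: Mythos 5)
Your overall strategy---making $\mathcal S$ rational of dimension $n^3-n^2$ via Theorems \ref{rat} and \ref{section}(i), getting simplicity from Theorem \ref{M}, and injectivity from the uniqueness in Theorem \ref{section}(ii)---is exactly the synthesis the paper intends (the paper gives no more detailed proof than the assertion that Theorems \ref{rat}, \ref{section}, and \ref{M} imply the corollary). But there is a genuine gap where you form $\mathcal S^{\circ}$: you intersect $\mathcal S$ with the open set $\{m\in\mathcal M\mid t_q(m)\neq 0\;\forall q\}$ and declare the result dense open in $\mathcal S$. Density of that set in $\mathcal M$ gives no such thing: $\mathcal S$ is a proper closed subvariety, and the $t_q$ are not $G$-invariant, so nothing in the \emph{statement} of Theorem \ref{section} prevents $\mathcal S$ from lying entirely inside some hypersurface $\{t_{q_0}=0\}$, in which case your $\mathcal S^{\circ}$ is empty and both the simplicity step and the uniqueness step (which applies \ref{section}(ii) at the point $m\in\mathcal S^{\circ}$ itself, hence needs $t_q(m)\neq 0$) collapse. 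This is not hypothetical: let $S=\mathbb G_m$ act on ${\mathbf A}^2$ by $t\cdot(x,y)=(tx,t^{-1}y)$, and take $\mathcal S=\{x=1\}$, $t_1=x$, $t_2=x-1$. Then (i) holds (the invariant field is $\boldsymbol k(xy)$, and restriction sends $xy$ to $y$, an isomorphism onto $\boldsymbol k(\mathcal S)$) and (ii) holds on $\{t_1t_2\neq 0\}$ (the unique transporter of $(x_0,y_0)$ is $t=1/x_0$), yet $\mathcal S\cap\{t_1t_2\neq 0\}=\emptyset$. So properties (i) and (ii) alone do not license your step.

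The repair is cheap but necessary: saturate before intersecting. The set $G\cdot\{m\mid t_q(m)\neq 0\;\forall q\}$ is open (a union of translates of an open set), dense, and $G$-invariant, and it meets $\mathcal S$: for any $a$ with all $t_q(a)\neq 0$, the transporter $g$ from \ref{section}(ii) puts $g\cdot a$ into $\mathcal S$. Hence its trace on the irreducible $\mathcal S$ is nonempty open, therefore dense; likewise intersect with $G\cdot U_0$, where $U_0$ is a dense open subset contained in your $\mathcal M^{\circ}$ (note $\mathcal M^{\circ}$ itself is not known to be open, so your ``$\mathcal S\cap\mathcal M^{\circ}$ is dense open'' needs this same repair). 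Then run the injectivity argument on the orbit rather than at the section points: if $m,m'$ in this dense open subset of $\mathcal S$ lie in one orbit, that orbit contains a point $a$ with all $t_q(a)\neq 0$; writing $m=h\cdot a$ and $m'=h'\cdot a$, both in $\mathcal S$, the uniqueness in \ref{section}(ii) forces $h=h'$, so $m=m'$. (Alternatively, non-isomorphism on a dense open subset of $\mathcal S$ follows from (i) alone: finitely many invariants whose restrictions generate $\boldsymbol k(\mathcal S)$ separate points of $\mathcal S$ in general position, while each invariant, having $G$-invariant domain of definition, is constant on the orbit of every point of $\mathcal S$ at which it is defined.) With these corrections your parametrization step is fine and the proof closes.
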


We do not know whether the field  $\boldsymbol k(\mathcal C)^G$ is rational over  $\boldsymbol k$
or not, however, Theorem \ref{C} implies the following
\begin{corollary} \label{sr}
The field
$\boldsymbol k(\mathcal C)^G$ is a stably rational extension of the field  $\boldsymbol k$ of the transcendence degree $(n-1)n^2/2$.
\end{corollary}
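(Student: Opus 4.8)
The plan is to obtain both assertions from Theorem~\ref{C} via the no-name method, using that theorem only through its consequence that $G$ acts generically freely on $\mathcal C$. First I would settle the transcendence degree. As a $G$-module, $\mathcal C\cong\operatorname{Sym}^2(V^*)\otimes V$, so $\dim_{\boldsymbol k}\mathcal C=\tbinom{n+1}{2}\,n=n^2(n+1)/2$. By Theorem~\ref{C} the stabilizer of a point in general position is trivial, whence $\max_{m\in\mathcal C}\dim G\cdot m=\dim G=n^2$; by \cite[Sect.\,2.3, Cor.]{PV} this gives $\operatorname{tr.deg}_{\boldsymbol k}\boldsymbol k(\mathcal C)^G=\dim\mathcal C-n^2=n^2(n-1)/2$, the asserted value.

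For stable rationality I would pair $\mathcal C$ with an auxiliary generically free representation having a trivial rational quotient. Let $W:=V^{\oplus n}$ be the direct sum of $n$ copies of the tautological $G$-module $V$ (equivalently $\operatorname{End}(V)$ with $G$ acting by left multiplication). A tuple $(v_1,\dots,v_n)\in W$ is a basis of $V$ precisely on a dense open subset, on which $G$ acts simply transitively; hence the $G$-action on $W$ is generically free, $\boldsymbol k(W)^G=\boldsymbol k$, and $\dim W=n^2$.

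Next I would invoke the no-name lemma for the diagonal $G$-action on $\mathcal C\oplus W$, viewing the two projections $\mathcal C\oplus W\to\mathcal C$ and $\mathcal C\oplus W\to W$ as trivial $G$-vector bundles. The lemma asserts that if $G$ acts generically freely on the base $X$, then $\boldsymbol k(X\oplus Y)^G$ is purely transcendental over $\boldsymbol k(X)^G$ of degree $\dim Y$: by Rosenlicht's theorem \cite[Thm.\,2]{Ros} a dense open $G$-invariant $U\subseteq X$ is a $G$-torsor over its quotient $U/G$, the quotient $(U\times Y)/G\to U/G$ is the vector bundle associated with the representation $Y$, and its generic fibre is the twist of $Y$ by a $\mathrm{GL}(Y)$-torsor over $\boldsymbol k(X)^G=\boldsymbol k(U/G)$, which is split by Hilbert's Theorem~90 and so purely transcendental. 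Taking $X=W$ (generically free, $\boldsymbol k(W)^G=\boldsymbol k$) shows $\boldsymbol k(\mathcal C\oplus W)^G$ is rational over $\boldsymbol k$; taking $X=\mathcal C$ (generically free by Theorem~\ref{C}) shows it is purely transcendental over $\boldsymbol k(\mathcal C)^G$ of degree $n^2$. Combining, $\boldsymbol k(\mathcal C)^G(x_1,\dots,x_{n^2})$ is rational over $\boldsymbol k$, i.e.\ $\boldsymbol k(\mathcal C)^G$ is stably rational.

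The substantive input is Theorem~\ref{C}, which is already available; given it, the only point needing care is the characteristic-free application of the no-name lemma. The crucial observation making it work for $G=\mathrm{GL}(V)$ in every characteristic $\neq 2$ is that the fibre is twisted by a torsor under $\mathrm{GL}(Y)$, where $H^1$ vanishes by Hilbert~90, so the twist is split and the extension is genuinely purely transcendental — no specialness of $G$ and no restriction on $\mathrm{char}\,\boldsymbol k$ is required. The remaining verifications (irreducibility of $\mathcal C$, which is a vector space, and the dense-orbit claim for $W$) are routine.
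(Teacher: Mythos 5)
Your computation of the transcendence degree via \cite[Sect.\;2.3, Cor.]{PV} is correct and characteristic-free, and your double no-name argument with $W=V^{\oplus n}$ is a sound strategy; but there is a genuine gap in how you feed Theorem \ref{C} into the No-Name Lemma. That lemma (Lemma \ref{Nnl} of the paper, valid in any characteristic by \cite{RV}) requires the action on the base to be \emph{locally free}: for points $m$ in general position the orbit morphism $G\to G\cdot m$ must be an isomorphism, i.e., the stabilizer $G_m$ must be trivial \emph{and} the orbit morphism must be separable. Theorem \ref{C} supplies only the first condition (triviality of the automorphism group of $\{V, m\}$, a statement about rational points). In characteristic $p>0$ --- which the paper allows, only ${\rm char}\,\boldsymbol k=2$ being excluded --- triviality of $G_m$ does not imply separability: the scheme-theoretic stabilizer can be a nontrivial infinitesimal group scheme, in which case the map $U\to U/G$ furnished by Rosenlicht's theorem is \emph{not} a torsor, and the descent/Hilbert-90 mechanism you describe never gets started. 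So your remark that ``no restriction on ${\rm char}\,\boldsymbol k$ is required'' points at exactly the place where the argument is incomplete: the application of the no-name lemma with base $\mathcal C$ (the one giving pure transcendence of $\boldsymbol k(\mathcal C\oplus W)^G$ over $\boldsymbol k(\mathcal C)^G$) is unjustified in positive characteristic. (The application with base $W$ is fine: on the open set of bases the orbit map is visibly an isomorphism.)

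The missing step is precisely what the paper proves in the course of Theorems \ref{section} and \ref{sectionC}: separability is checked by showing surjectivity of the differential of the orbit morphism at one explicit point, namely $\dim\mathfrak g\cdot c=\dim G$ for $c=\ell^1\otimes\ell^1\otimes e_1+\cdots+\ell^n\otimes\ell^n\otimes e_n$ (formulas \eqref{ide}--\eqref{final}), combined with the openness of $\{m\in\mathcal M\mid \dim\mathfrak g\cdot m\geqslant \dim G\}$. Once local freeness of $G$ on $\mathcal C$ is established this way, your route does go through, and it is genuinely different from the paper's own proof of Corollary \ref{sr}: the paper applies Theorem \ref{SS}, exploiting that ${\rm GL}_n$ is \emph{special}, so the generic torsor admits a rational section, $\mathcal C\approx G\times(\mathcal C\dss G)$, and stable rationality follows from rationality of the underlying variety of $G$ together with rationality of $\boldsymbol k(\mathcal C)$. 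Your sandwich of two no-name applications avoids specialness altogether (it would work for any group with a locally free linear action admitting an auxiliary ``trivial-quotient'' module like $W$), and is closer in spirit to the paper's Lemma \ref{123}; the price is the auxiliary factor $W$ and a larger witness field $\boldsymbol k(\mathcal C\oplus W)^G$ in place of $\boldsymbol k(\mathcal C)$ itself.
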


Using Theorem \ref{C}, one proves the analogue of  Theorem \ref{section} for $\mathcal C$:

\begin{theorem}[{{\rm The existence of normal forms for $\mathcal C$}}]\label{sectionC}
The formulation of Theorem  {\rm \ref{section}} holds true if $\mathcal M$ in it  is replaced by $\mathcal C$.
\end{theorem}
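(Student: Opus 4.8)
The plan is to run the proof of Theorem~\ref{section} with $\mathcal C$ in place of $\mathcal M$ and Theorem~\ref{C} in place of Theorem~\ref{M}. The only property of $\mathcal M$ that enters that argument is that $G$ acts on a dense open subset with trivial stabilizer, and Theorem~\ref{C} supplies exactly this for $\mathcal C$: the $G$-stabilizer of a multiplication $m\in\mathcal C$ equals the automorphism group of $\{V,m\}$, which is trivial in general position. Hence the $G$-action on the irreducible $G$-invariant variety $\mathcal C$ is generically free.

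By Rosenlicht's theorem \cite[Thm.\,2]{Ros}, the field $\boldsymbol k(\mathcal C)^G$ separates $G$-orbits of points in general position, so over a dense open subset of the rational quotient $\mathcal C/\!\!/G$ (whose function field is $\boldsymbol k(\mathcal C)^G$) the fibres of the quotient map $\pi$ are single $G$-orbits; by generic freeness these fibres are $G$-torsors. Since $G=\mathrm{GL}(V)$ is a special group, i.e.\ all its torsors are Zariski-locally trivial, the generic torsor is rationally trivial and $\pi$ admits a rational section. After shrinking, this yields an irreducible locally closed subset $\mathcal S^\circ\subset\mathcal C$, mapped isomorphically onto a dense open subset of $\mathcal C/\!\!/G$ by $\pi$, such that the action morphism $\alpha\colon G\times\mathcal S^\circ\to\mathcal C$, $(g,s)\mapsto g\cdot s$, is birational onto a dense open subset of $\mathcal C$.

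Next I would set $\mathcal S:=\overline{\mathcal S^\circ}$ and let $\{s_p\}_{p\in P}$ be a set of nonconstant generators of the (proper, nonzero) ideal of $\mathcal S$, so that $\mathcal S=\{m\in\mathcal C\mid s_p(m)=0\;\forall p\}$; $\mathcal S$ is irreducible because $\mathcal S^\circ$ is. Condition~(i) is then immediate: since $\pi$ restricts to a birational isomorphism $\mathcal S^\circ\to\mathcal C/\!\!/G$, the restriction map $f\mapsto f|_{\mathcal S}$ is a well-defined $\boldsymbol k$-isomorphism $\boldsymbol k(\mathcal C)^G\cong\boldsymbol k(\mathcal S)$. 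For condition~(ii) I would use the injectivity half of the birationality of $\alpha$: over a dense open subset, a point $a$ has a unique preimage $(g,s)$ under $\alpha$, and then $g^{-1}$ is the unique element of $G$ carrying $a$ into $\mathcal S^\circ$, uniqueness being guaranteed by the trivial stabilizer of $a$.

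The main obstacle is that condition~(ii) requires uniqueness of $g$ with $g\cdot a\in\mathcal S$ for the \emph{closed} set $\mathcal S$, whereas birationality of $\alpha$ only controls the open slice $\mathcal S^\circ$; I must therefore prevent the orbit of a general $a$ from meeting the boundary $\mathcal S\setminus\mathcal S^\circ$. This is settled by a dimension count: $\mathcal S^\circ$ is open dense in $\mathcal S$, so $\dim G\cdot(\mathcal S\setminus\mathcal S^\circ)\leqslant\dim(\mathcal S\setminus\mathcal S^\circ)+\dim G<\dim\mathcal S^\circ+\dim G=\dim\mathcal C$, whence this saturation lies in a proper closed subset. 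Adjoining defining equations of that subset (and of the complement of the locus where $\alpha$ is an isomorphism) to the family $\{t_q\}_{q\in Q}$, and arranging that $\{m\mid t_q(m)\neq0\;\forall q\}$ be a dense principal open subset, secures both the existence and the uniqueness of $g$ and completes the proof.
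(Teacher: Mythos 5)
Your overall route --- Rosenlicht quotient, generic fibres become $G$-torsors, specialness of $G={\rm GL}(V)$ yields a rational section, take $\mathcal S$ to be the closure of its image, plus a dimension count keeping general orbits off $\mathcal S\setminus\mathcal S^\circ$ --- is essentially the paper's own argument, which is packaged there as Theorem \ref{SS} on locally free actions of special groups. But there is a genuine gap at your very first step: you pass from ``the $G$-stabilizer of a point in general position in $\mathcal C$ is trivial'' (Theorem \ref{C}) directly to ``the $G$-action on $\mathcal C$ is generically free,'' and it is only this latter, stronger property that makes the generic fibres of the quotient map into $G$-torsors. The paper works over an algebraically closed field of arbitrary characteristic $\neq 2$, and its definition of a locally free action demands, besides triviality of stabilizers, that the orbit morphism $G\to G\cdot m$, $g\mapsto g\cdot m$, be \emph{separable}. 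In characteristic $p>0$ trivial stabilizers do not imply separability: the orbit map can be a purely inseparable bijection, in which case the fibres of the rational quotient are not torsors and the rational-section argument (hence both conclusions (i) and (ii)) breaks down.

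This separability issue is precisely what the paper spends the bulk of its proof of Theorems \ref{section} and \ref{sectionC} on. Separability is equivalent to surjectivity of the differential of the orbit map at the identity, i.e.\ to the equality $\dim \mathfrak g\cdot m=\dim G$, where $\mathfrak g=\mathfrak{gl}(V)$; since each set $\{m\in\mathcal M\mid \dim\mathfrak g\cdot m\geqslant d\}$ is open, it suffices to exhibit a \emph{single} point $c\in\mathcal C$ with $\dim\mathfrak g\cdot c=\dim G=n^2$. The paper takes $c=\ell^1\otimes\ell^1\otimes e_1+\cdots+\ell^n\otimes\ell^n\otimes e_n$ (the algebra with $e_ie_j=\delta_{ij}e_i$) and verifies by the explicit computation \eqref{final} that the $n^2$ vectors $x_{ij}\cdot c$ are linearly independent over $\boldsymbol k$. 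Your proposal is correct as written only if ${\rm char}\,\boldsymbol k=0$; to cover positive characteristic you must add this (or an equivalent) infinitesimal argument establishing local freeness, not merely triviality of generic stabilizers. Your boundary dimension count for condition (ii), on the other hand, is sound and in fact spells out a point the paper leaves implicit.
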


  The following
 analogue of Corollary \ref{param1} follows from Theorems \ref{C}, \ref{sectionC} and Corollary \ref{sr}:

 \begin{corollary}\label{param2}
There exists a system of simple pairwise nonisomorphic  commutative $n$-dimensional
$\boldsymbol k$-algebras rationally parametrized by  po\-ints of
an $(n-1)n^2/2$-dimensional stably rational algebraic variety.
 \end{corollary}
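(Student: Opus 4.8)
The plan is to build the parametrizing variety directly from the normal-form section $\mathcal S$ furnished by Theorem \ref{sectionC}, using Theorem \ref{C} to control the algebras it indexes and Corollary \ref{sr} to identify its birational type. First I would fix the irreducible set $\mathcal S$ together with the finite systems $\{s_p\}_{p\in P}$ and $\{t_q\}_{q\in Q}$ of Theorem \ref{sectionC}, and intersect the open dense locus $\{m\in\mathcal C\mid t_q(m)\neq 0\ \forall q\in Q\}$, on which each point has a unique $G$-translate in $\mathcal S$, with the open dense locus supplied by Theorem \ref{C}, on which $\{V,m\}$ is simple commutative with trivial automorphism group. Call this intersection $\mathcal C^{\circ}$ and set $\mathcal S^{\circ}:=\mathcal S\cap\mathcal C^{\circ}$, a dense open subset of $\mathcal S$. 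The proposed system is the family of algebras $\{V,m\}$ indexed by $m\in\mathcal S^{\circ}$.

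By Theorem \ref{C} every member of this family is a simple commutative $n$-dimensional $\boldsymbol k$-algebra with trivial automorphism group, so it remains to establish pairwise nonisomorphism of distinct members and to identify $\mathcal S^{\circ}$ as a stably rational variety of the correct dimension. For the first I would use the uniqueness in Theorem \ref{sectionC}(ii) together with the fact, recorded in the Introduction, that $\{V,a\}\cong\{V,b\}$ if and only if $G\cdot a=G\cdot b$. Suppose $m_1,m_2\in\mathcal S^{\circ}$ lie in one $G$-orbit, say $m_2=g\cdot m_1$. Then both $e$ and $g^{-1}$ carry $m_2$ into $\mathcal S$, since $e\cdot m_2=m_2\in\mathcal S$ and $g^{-1}\cdot m_2=m_1\in\mathcal S$; the uniqueness of the normalizing element for $m_2$ then forces $g^{-1}=e$, whence $m_1=m_2$. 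Thus distinct parameters in $\mathcal S^{\circ}$ yield nonisomorphic algebras.

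Finally, to identify the parametrizing variety I would invoke the field isomorphism of Theorem \ref{sectionC}(i), namely that restriction gives $\boldsymbol k(\mathcal C)^G\cong\boldsymbol k(\mathcal S)$, and combine it with Corollary \ref{sr}: the left-hand field is stably rational over $\boldsymbol k$ of transcendence degree $(n-1)n^2/2$, hence so is $\boldsymbol k(\mathcal S)$. Since $\mathcal S$ is irreducible, its dimension equals this transcendence degree, and the dense open $\mathcal S^{\circ}$, being birational to $\mathcal S$, is again a stably rational variety of dimension $(n-1)n^2/2$. The only step that is not pure bookkeeping is the nonisomorphism argument, and its whole force lies in the observation that a point already lying in $\mathcal S$ is normalized only by the identity; everything else is packaged in the three cited results.
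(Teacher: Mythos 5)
Your overall route is exactly the one the paper intends (the paper offers no detailed proof, only the assertion that the corollary follows from Theorems \ref{C}, \ref{sectionC} and Corollary \ref{sr}), and your two later steps are sound: the uniqueness-of-normal-form argument correctly yields pairwise nonisomorphism, and Theorem \ref{sectionC}(i) combined with Corollary \ref{sr} correctly identifies $\boldsymbol k(\mathcal S)$ as stably rational of transcendence degree $(n-1)n^2/2$, hence $\mathcal S$ (and any dense open subset of it) as a stably rational variety of that dimension. The genuine gap is at the very first step: you set $\mathcal S^{\circ}:=\mathcal S\cap\mathcal C^{\circ}$ and declare it ``a dense open subset of $\mathcal S$'' with no justification. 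The set $\mathcal C^{\circ}$ is dense and open in $\mathcal C$, but $\mathcal S$ is a \emph{proper closed} subset of $\mathcal C$, and a dense open subset of an irreducible variety need not meet a given proper closed subset at all. Moreover, this cannot be deduced from the cited statements as they stand: if $\big(\{s_p\}_{p\in P},\{t_q\}_{q\in Q}\big)$ is any valid choice of data for Theorem \ref{sectionC}, then adjoining one nonconstant $s_{p_0}$ to the family $\{t_q\}$ gives another valid choice whose open locus $\{t_q\neq 0\ \forall q\}\cap\{s_{p_0}\neq 0\}$ is disjoint from $\mathcal S$; for that choice your $\mathcal S^{\circ}$ is empty and the construction produces the empty family.

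The missing idea is $G$-saturation. Both properties cutting out $\mathcal C^{\circ}$ are $G$-invariant: being simple with trivial automorphism group is an isomorphism invariant of $\{V,m\}$, and if $a$ admits a unique $g_a\in G$ with $g_a\cdot a\in\mathcal S$, then $h\cdot a$ admits the unique normalizer $g_ah^{-1}$. Hence you may replace $\mathcal C^{\circ}$ by its saturation $G\cdot\mathcal C^{\circ}$, which is still open and dense (a union of translates of an open set containing the dense set $\mathcal C^{\circ}$), is $G$-stable, and still consists of points enjoying both properties. Nonemptiness of the section is then immediate: take any $a\in G\cdot\mathcal C^{\circ}$; its normal form $g_a\cdot a$ lies in $\mathcal S$ and, by $G$-stability, also in $G\cdot\mathcal C^{\circ}$, so $\mathcal S\cap(G\cdot\mathcal C^{\circ})$ is a nonempty open, hence dense, subset of the irreducible variety $\mathcal S$. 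After this repair the rest of your argument goes through verbatim, since the points $m_1$, $m_2$ in your uniqueness step now do lie in the locus where Theorem \ref{sectionC}(ii) applies.
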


In the proof of Theorem \ref{rat}, we use the following general statement:

 \begin{lemma} \label{123} Let  $L_1$, $L_2$ и $L_3$ be finite dimensional vector spaces over  $\boldsymbol k$, each of which is endowed with a linear action of an affine algebraic group $H$.\;Assume that
 \begin{enumerate}[\hskip 4.2mm\rm(i)]
 \item the actions of the group $H$ on $L_1$ and $L_2$ are locally free\footnote{see the definition of a locally free action at the end of this section.};
 \item the field ${\boldsymbol k}(L_1)^H$ is rational  over $\boldsymbol k$;
 \item $\dim L_3\geqslant \dim L_1$.
 \end{enumerate}
 Then the field ${\boldsymbol k}(L_2\oplus L_3)^H$ is also rational over $\boldsymbol k$.
 \end{lemma}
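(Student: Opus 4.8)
The plan is to deduce rationality from repeated applications of the \emph{no-name lemma}, supplemented by a single appeal to the inequality (iii), which is what converts a stable-rationality statement into genuine rationality. Recall the no-name lemma in its birational form: if an affine algebraic group $H$ acts generically freely on an irreducible variety $X$ and $W$ is any finite-dimensional $H$-module, then $\boldsymbol k(X\times W)^H$ is purely transcendental over $\boldsymbol k(X)^H$ of degree $\dim W$. The point I would stress---essential because $H$ here is an arbitrary affine group, not necessarily special---is that generically on $X$ the quotient map realizes an $H$-torsor over the field $F:=\boldsymbol k(X)^H$, and the associated twist of $W$ is a vector bundle over $\operatorname{Spec}F$, hence trivial since $H^1(F,\mathrm{GL}_{\dim W})=0$; thus the generic fibre of $X\times W\to X/H$ is $\mathbf A^{\dim W}_F$, giving the claimed pure transcendence. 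Since the actions on $L_1$ and $L_2$ are locally free, the stabilizer of a point in general position in each is trivial, so these actions---and the diagonal action on $L_2\oplus L_3$, whose generic stabilizer lies in that of its $L_2$-component---are generically free.

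First I would apply the no-name lemma to the generically free action on $L_2$ with module $L_3$, obtaining a $\boldsymbol k$-isomorphism
\[
\boldsymbol k(L_2\oplus L_3)^H\;\cong\;\boldsymbol k(L_2)^H\big(t_1,\dots,t_{\dim L_3}\big).
\]
Next I would compute $\boldsymbol k(L_1\oplus L_2)^H$ in two ways, using that \emph{both} summands carry generically free actions: taking $L_1$ as base and $L_2$ as module presents it as a purely transcendental extension of $\boldsymbol k(L_1)^H$ of degree $\dim L_2$, whereas taking $L_2$ as base and $L_1$ as module presents it as one of $\boldsymbol k(L_2)^H$ of degree $\dim L_1$. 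Comparing the two presentations of the same field yields the stable equivalence
\[
\boldsymbol k(L_2)^H\big(u_1,\dots,u_{\dim L_1}\big)\;\cong\;\boldsymbol k(L_1)^H\big(v_1,\dots,v_{\dim L_2}\big).
\]

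Finally I would invoke (iii). Set $r:=\dim L_3-\dim L_1\geqslant 0$. Splitting the $\dim L_3$ transcendentals in the first display into an initial block of $\dim L_1$ and a remaining block of $r$, and substituting the second display into the initial block, I obtain
\[
\boldsymbol k(L_2\oplus L_3)^H\;\cong\;\boldsymbol k(L_1)^H\big(w_1,\dots,w_{\dim L_2+r}\big),
\]
a purely transcendental extension of $\boldsymbol k(L_1)^H$, which is rational over $\boldsymbol k$ by (ii); hence $\boldsymbol k(L_2\oplus L_3)^H$ is rational over $\boldsymbol k$. I expect the last step to be the crux, and it is precisely where (iii) is needed: the two-way computation by itself shows only that $\boldsymbol k(L_2\oplus L_3)^H$ is \emph{stably} rational, and stable rationality does not imply rationality in general. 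The inequality $\dim L_3\geqslant\dim L_1$ guarantees $r\geqslant 0$, so that the discrepancy between the two presentations is absorbed into spare transcendentals already present inside $L_3$ rather than having to be cancelled---this is exactly what upgrades stable rationality to honest rationality.
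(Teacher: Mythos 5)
Your proof is correct and takes essentially the same route as the paper's: both apply the no-name lemma three times (to the projection $L_2\oplus L_3\to L_2$ and to the two projections of $L_1\oplus L_2$), compare the two resulting presentations of $\boldsymbol k(L_1\oplus L_2)^H$, and then use (iii) to absorb the discrepancy $\dim L_3-\dim L_1\geqslant 0$ into spare transcendentals and (ii) to conclude rationality --- the paper merely phrases this with birational isomorphisms of rational quotients rather than with invariant fields. One terminological caution: in positive characteristic, ``trivial generic stabilizer'' is strictly weaker than the paper's notion of a locally free action (which also demands separability of the orbit map), and the torsor structure your justification of the no-name lemma relies on requires the latter; since hypothesis (i) supplies exactly that, nothing in your argument breaks, but you should not pass through ``generically free'' as an intermediate notion.
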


 Theorems \ref{C} and \ref{M} are proved in Section 2.  Lemma \ref{123}, Theorems
 \ref{rat}, \ref{section},
 \ref{sectionC}, and Corollary
\ref{sr} are proved in Section 3.

\vskip 3mm

 \noindent {\it Terminology and notation}

 \vskip 1mm

In what follows, variety means algebraic variety over
$\boldsymbol k$ in the sense of Serre.

The topological terms are related to the Zariski topology.

  If $K/L$ is a field extension, then  $K$ is called  {\it rational}\footnote{{\it pure} in the terminology of  \cite{Bour2}.} over $L$ if either  $K$ is a finitely generated purely transcendental extension of the field $L$ or
  $K=L$. If there exists a field extension $S/K$ such that  $S$ is rational over $K$ and over $L$, then  $K$  is called {\it stably rational} over $L$.

  For every nonempty set $T$ of transformations of a set $M$, we denote by $M^T$ the set of all joint fixed points of all transformation from $T$.

We freely use the standard notation, terminology, and conventions of the theory of algebraic groups and invariant theory \cite{Bor}, \cite{PV}, \cite{St}.\;In particular, we say that a certain property holds for
   {\it points in general position} in variety $X$ if it holds for all points of an open dense subset of $X$ (depending on the property under consideration).

   Action of an algebraic group
$H$ on a variety  $X$ means regular action, i.e., such that the map
$H\times X\to X$, $(h, x)\mapsto h\cdot x$ defining this action is a morphism.

If $X$ is irreducible, then $\pi_{H, X}\colon X\dashrightarrow X\dss H$ denotes a rational quotient for this action, i.e.,\;$X\dss H$ is a variety (defined up to a birational isomorphism) and  $\pi_{H, X}$ is a dominant rational map such that $\pi_{H, X}^*({\boldsymbol k}(X\dss H)={\boldsymbol k}(X)$.

An action of an algebraic group $H$ on a variety $X$ is called
{\it locally free} if, for points $x$ in general position in $X$, the orbit morphism $H\to H\cdot x$, $h\mapsto h\cdot x$ is an isomorphism. The latter condition is equivalent to the requirement that the following two conditions hold:
\begin{enumerate}[\hskip 4.2mm \rm(a)]
\item the specified orbit morphism is separable;
\item the stabilizer $H_x$ of the point $x$ is trivial.
\end{enumerate}
If ${\rm char}\,\boldsymbol k=0$, then condition (a) is automatically satisfied, i.e., in this case local freeness of an action is the same as triviality of the stabilizers of points in general position. If ${\rm char}\,\boldsymbol k>0$, this is not so.

\section{Proofs of Theorems \ref{C} and \ref{M}}

It will be convenient to fix a basis  $e_1,\ldots, e_n$ in $V$ and assume   that $G={\rm GL}_n(\boldsymbol k)$ (indenti\-fying elements of  $G$ with their
matrices in this basis).

 Let $\ell^1,\ldots, \ell^n$ be the dual basis in $V^*$.\;Then the elements
\begin{equation}\label{basis}
c^{pq}_r:=(\ell^p\otimes\ell^q+\ell^q\otimes\ell^p)\otimes e_r\in \mathcal C,\;\;\mbox{where $1\leqslant p\leqslant q\leqslant n$, $1\leqslant r\leqslant n,$}
\end{equation}
(somehow ordered) constitute a basis in $\mathcal C$, and so
\begin{equation}\label{dimC}
\dim \mathcal C=\frac{n^2(n+1)}{2}.
\end{equation}
From \eqref{dimC}, \eqref{decomp} and $\dim \mathcal M=n^3$  we get
\begin{equation}\label{dimA}
\dim\mathcal A=\frac{(n-1)n^2}{2}.
\end{equation}

Below we use the following known (see,\;e.g.,\,\cite[Lemma\;3]{AP}) statement:
\begin{lemma}\label{tran}
Let an algebraic group $H$ act on a variety $X$ and let $Y$ be a closed subset of $X$.\;Then $N_H(Y):=\{h\in H \mid h\cdot Y\subseteq Y\}$ is a closed subgroup of $H$ and
dimension of the closure in $X$ of the set  $\dim H\cdot Y$ is at most $\dim H+\dim Y-\dim N_H(Y)$.
\end{lemma}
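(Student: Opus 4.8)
The plan is to treat the two assertions separately. For the first, I would show that $N_H(Y)$ is closed and is a submonoid, and then invoke the standard fact that a nonempty Zariski-closed submonoid of an algebraic group is automatically a subgroup. Closedness follows by writing
\[
N_H(Y)=\bigcap_{y\in Y}\{h\in H\mid h\cdot y\in Y\},
\]
since each set on the right is the preimage of the closed set $Y$ under the orbit morphism $H\to X$, $h\mapsto h\cdot y$, and hence is closed. The submonoid property is immediate: $e\in N_H(Y)$, and if $h_1,h_2\in N_H(Y)$ then $(h_1h_2)\cdot Y=h_1\cdot(h_2\cdot Y)\subseteq h_1\cdot Y\subseteq Y$.

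For the passage from closed submonoid to subgroup, I would fix $h\in N_H(Y)$ and consider the descending chain of closed subsets
\[
N_H(Y)\supseteq h\cdot N_H(Y)\supseteq h^2\cdot N_H(Y)\supseteq\cdots,
\]
each term being the image of $N_H(Y)$ under the isomorphism of $H$ given by left translation, hence closed and of the same dimension. By Noetherianity the chain stabilizes, say $h^k\cdot N_H(Y)=h^{k+1}\cdot N_H(Y)$; translating by $h^{-k}$ yields $N_H(Y)=h\cdot N_H(Y)$, so $e=h\cdot n$ for some $n\in N_H(Y)$, whence $h^{-1}\in N_H(Y)$. This gives closure under inverses, so $N_H(Y)$ is a closed subgroup.

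For the dimension bound, I would consider the morphism $\mu\colon H\times Y\to X$, $(h,y)\mapsto h\cdot y$, whose image is $H\cdot Y$. The key observation is that the identity component $N:=N_H(Y)^\circ$ acts on $H\times Y$ by $n\cdot(h,y)=(hn^{-1},n\cdot y)$; this is well defined because $n\cdot Y\subseteq Y$ and, being connected and containing $e$, $N$ preserves each irreducible component of $Y$. This action is free and leaves the fibers of $\mu$ invariant, since $\mu(hn^{-1},n\cdot y)=\mu(h,y)$, so every nonempty fiber of $\mu$ contains a free $N$-orbit and therefore has dimension at least $\dim N=\dim N_H(Y)$. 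Applying the theorem on the dimension of fibers of a dominant morphism to the restriction of $\mu$ to $H^\circ\times Y_i$, for each irreducible component $Y_i$ of $Y$, gives
\[
\dim\overline{H^\circ\cdot Y_i}\leqslant \dim(H^\circ\times Y_i)-\dim N_H(Y)\leqslant \dim H+\dim Y-\dim N_H(Y);
\]
since $\overline{H\cdot Y}$ is a finite union of translates of the sets $\overline{H^\circ\cdot Y_i}$, each of the same dimension, the stated bound follows.

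The main obstacle I anticipate is the bookkeeping needed to make the fiber-dimension argument rigorous when $Y$ is reducible and $H$ is disconnected: one must restrict to an irreducible component $Y_i$ and to $H^\circ$ in order to apply the fiber-dimension theorem on irreducible varieties, and then verify that $N_H(Y)^\circ$ genuinely acts on $H^\circ\times Y_i$ preserving $\mu$-fibers. This is exactly where connectedness of $N_H(Y)^\circ$, which forces it to fix each component of $Y$, is used, together with the inclusion $N_H(Y)^\circ\subseteq H^\circ$. The algebraic-group input that a closed submonoid is a subgroup, while standard, is the other point requiring care.
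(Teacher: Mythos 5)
Your proof is correct and takes essentially the same approach as the paper: the paper's proof is a two-line sketch that cites Borel for the closed-subgroup claim and applies the fiber dimension theorem to the very same morphism $H\times Y\to X$, $(h,y)\mapsto h\cdot y$. Your write-up simply supplies the details the paper leaves implicit (the closed-submonoid-to-subgroup argument, and the lower bound $\dim N_H(Y)$ on fiber dimensions via the free action $n\cdot(h,y)=(hn^{-1},n\cdot y)$, with the component/connectedness bookkeeping).
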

\begin{proof} Regarding the closedness see \cite[Prop.\,I, 1,7]{Bor}. The inequality
is proved by apply\-ing
the fiber dimension theorem to the morphism
$H\times Y\to X$, $(h, y)\mapsto h\cdot y$.
\end{proof}

\begin{proof}[Proof of the claim of simplicity in Theorem {\rm \ref{C}}]\

Here, mutatis mutandis, the same arguments are applicable as in the  proof of simplicity of $\{V, m\}$  for a point $m$ in general position in $\mathcal M$, given in \cite[p.\;129]{Po}.
   For $n=1$ the claim is clear, therefore
   let  $n\geqslant 2$.

  For every $d\in \mathbf Z$, $0< d\leqslant n$, put
  \begin{equation*}
  \mathcal I_d:=\{m\in\mathcal C\mid \{V, m\} \;\;\mbox{has a $d$-dimensional two-sided ideal}\}.
  \end{equation*}
  If a commutative $\boldsymbol k$-algebra $\{V, m\}$ has a two-sided $d$-dimensional ideal $I$, then
  for every element $g\in G$ the subspace  $g\cdot I$ is a two-sided  $d$-dimensional ideal of $\{V, g\cdot m\}$. Since $G$ acts transitively on the set of all $d$-dimensional linear subspaces of $V$, there exists an element $g$ such that  $g\cdot I$ is the $\boldsymbol k$-linear span
$V_d$ of the vectors $e_1,\ldots, e_d$. Hence
\begin{align}\label{ideal1}
\mathcal I_d&=G\cdot \mathcal L_d,\;\mbox{where}\\
\label{ideal2}
 \mathcal L_d&:=\{m\in\mathcal C\mid
V_d\;\mbox{is the two-sided ideal of $\{V, m\}$}\}.
\end{align}

Consider in $G$ the subgroup
\begin{equation}\label{P}
P_d:=\{g\in G\mid g\cdot V_d=V_d\}.
\end{equation}
It follows from \eqref{ideal2}, \eqref{P} the equality
$P_d\cdot \mathcal L_d=\mathcal L_d,$
which, in view of  \eqref{ideal1} and Lemma \ref{tran},
implies that
\begin{equation}\label{const1}
\dim \mathcal I_d\leqslant \dim G + \dim \mathcal L_d -\dim P_d
\end{equation}
and, since  $P_d$ is a parabolic subgroup of  $G$,
that the set $\mathcal I_d$ is closed in\;$\mathcal C$
(see, e.g.,\;\cite[Sect.\;2.13, Lemma\;2]{St}).

It follows from \eqref{basis} that
$V_d$ is the two-sided ideal of a commutative
 $\boldsymbol k$-algebra $\{V, m\}$, where
\begin{equation}\label{alp}
m=\sum_{\fratop{1\leqslant p\leqslant q\leqslant n}{1\leqslant r\leqslant n}} \alpha_{pq}^r c^{pq}_r,\quad \alpha_{pq}^r \in {\boldsymbol k},
\end{equation}
if and only if  $\alpha_{pq}^r=0$ for all triples $(p, q, r)$ from the range of summation in \eqref{alp}
satisfying the conditions:
\begin{enumerate}[\hskip 2.2mm \rm(i)]
\item $p$ or $q$ is not greater than $d$;
\item $r>d$.
\end{enumerate}
This and \eqref{ideal2} imply that ${\mathcal L}_d$ is a linear subspace in  $\mathcal C$ and,
since in total there are $$(n-d)\bigg(\!\!\binom{n+1}{2}-\binom{n-d+1}{2}\!\!\bigg)=\frac{(n-d)d(2n-d+1)}{2}$$ of these triples, we obtain
\begin{equation}\label{dim}
\dim {\mathcal L}_d
=\dim \mathcal C-\frac{(n-d)d(2n-d+1)}{2}.
\end{equation}

From $\dim G=n^2$, $\dim P_d=n^2-d(n-d)$, and \eqref{dim} we deduce that
the right-hand side of inequality \eqref{const1} is equal to
$\dim\mathcal C-d(n-d)(2n-d+1)/2+d(n-d)$, which, for $d<n$, is smaller than $\dim\mathcal C$.\;Hence $\mathcal C\setminus \mathcal I_d$ is, for every $d<n$, a nonempty open subset of $\mathcal C$.\;Therefore, the intersection of all these open subsets is a nonempty open subset of  $\mathcal C$ such that for every its point  $m$ the $\boldsymbol k$-algebra $\{V, m\}$ is simple.\;This proves the claim of simplicity of algebra in Theorem {\rm \ref{C}}.
\end{proof}

\begin{proof}[Proof of the claim of triviality in Theorem {\rm \ref{C}}]\

1.  As above, since for $n=1$ the claim is clear, we assume further that $n\geqslant 2$.

Since the stabilizer $G_x$ of every point $x\in \mathcal C$ is an algebraic subgroup of $G$,
the claim will be proved if shall prove the existence in $\mathcal C$ the nonempty open subsets
$\mathcal C_{(s)}$ and  $\mathcal C_{(u)}$ such that $G_x$ for every $x\in \mathcal \mathcal C_{(s)}$
(respectively, for every $x\in \mathcal \mathcal C_{(u)}$) does not contain nonidentity semisimple
(respectively, unipotent) elements.

  \vskip 1mm

  2. First we prove the existence of $\mathcal C_{(s)}$.


The plan is the following.\;We shall use that every semisimple element of the group $G$ is conjugate to an element of its fixed maximal torus, see\;\cite[Thms.\;11.10, 10.6, Prop.\;11.19]{Bor}.
Namely, let ${\rm X}(T)$ be the group characters
$T\to \boldsymbol k^*$, and let $\Delta$ be the weight system of the $T$-module $\mathcal C$, i.e.,\;the set of all characters
$\mu\in {\rm X}(T)$ such that the dimension of its weight subspace
 \begin{equation}\label{w}
\mathcal C_\mu:=\{m\in \mathcal C\mid t\cdot m=\mu(t)m\;\;\forall\;
t\in T
\}
 \end{equation}
is positive.\;We have the decomposition
\begin{equation}\label{ww}
\mathcal C=\textstyle\bigoplus_{\mu\in \Delta}\mathcal C_\mu.
\end{equation}
In view of \eqref{w}, \eqref{ww}, for every element $t\in T$, the equality
\begin{equation}\label{www}
\mathcal C^t:=\{m\in \mathcal C\mid t\cdot m=m\}=\textstyle\bigoplus_{\{
\mu\in \Delta\,\mid\, \mu(t)=1
\}}\mathcal C_\mu.
\end{equation}
holds. Since the set $\Delta$ is finite, it follows from \eqref{www} that, when
 $t$ runs through $T$, the space $\mathcal C^t$, being the sum of some of the weight subspaces $\mathcal C_\mu$, runs through only finitely many  $T$-invariant linear subspaces in $\mathcal C$.\;We shall find an integer $h$ such that for every nonidentity $t\in T$ the inequalities
 \begin{align}
 \dim \mathcal C^t&\leqslant h,\label{h1} \\
h+\dim G-\dim T=h+ n^2-n&<\dim \mathcal C=\frac{n^2(n+1)}{2}\label{h2}.
\end{align}
hold.\;In view of Lemma \ref{tran}, it then follows from
 \eqref{h1}, \eqref{h2} that the closure of the set $G\cdot \mathcal C^t$ in $\mathcal C$ is a proper
 subvariety of  $\mathcal C$.\;Since, as is explained above, the set of these closures is finite, the complement to their union is a nonempty open subset of  $\mathcal C$.
From the above it follows that it can be taken as $\mathcal C_{(s)}$.

\vskip 1mm

3. We now proceed to implement this plan.\;As $T$ we take the maximal torus consisting of diagonal matrices. Put
\begin{equation}\label{epsi}
    \varepsilon_i\colon T\to \boldsymbol k^*,\quad
     {\rm diag}(t_1,\ldots, t_n)\mapsto t_i.
\end{equation}
 Then $\varepsilon_1,\ldots, \varepsilon_n$ is a basis of the free abelain group ${\rm X}(T)$ and
\begin{equation}\label{weights}
    t\cdot e_i=\varepsilon_i(t) e_i, \quad t\cdot \ell_i=\varepsilon_i^{-1}(t)\ell_i\quad \mbox{for every element $t\in T$}.
\end{equation}
 Formulas \eqref{weights} and \eqref{basis} imply the inclusion $c^{pq}_r\in \mathcal C^{\ }_{\varepsilon_p^{-1}\varepsilon_q^{-1}\varepsilon_r}$, from which we obtain
\begin{equation}\label{delta}
\left.\begin{split}
\Delta&=\Delta'\cup \Delta'',\;\;\mbox{where}\\
\Delta'&:=\{\varepsilon_p^{-1}\varepsilon_q^{-1}\varepsilon_r\mid 1\leqslant\! p\!\leqslant \!q\!\leqslant n,\; 1\!\leqslant\! r\!\leqslant\! n,\;r\!\neq\! p,\;r\!\neq\! q\},\\
\Delta''&:=\{\varepsilon_s^{-1}\mid 1\!\leqslant\! s\!\leqslant\! n\},\;\;\mbox{and}\\
\dim \mathcal C^{\ }_{\mu}&=
\begin{cases} 1&\mbox{if $\mu\in \Delta'$},\\
n&\mbox{if $\mu\in \Delta''$}.
\end{cases}
\end{split}\right\}
\end{equation}

  Every character $\alpha\in{\rm X}(T)$ defines a partition of the set  $\Delta$
  as the union of mutually
  disjoint sequences of weights
  $\mu_1,\ldots,\mu_d\in\Delta$ having the properties
 \begin{enumerate}[\hskip 2.2mm \rm(i)]
 \item $\mu_{i+1}=\alpha\mu_i$ for every $i=1,\ldots, d-1$,
 \item $\alpha^{-1}\mu_1\notin \Delta$ and $\alpha\mu_d\notin\Delta$.
 \end{enumerate}
We call such sequences $\alpha$-{\it series}.

The kernel of action of  $G$ on $\mathcal C$ is trivial: being a normal subgroup, it lies in the center of
 $G$, hence also in  $T$, and it follows from \eqref{epsi}, \eqref{weights},  and the inclusion $\Delta''\subset \Delta$ that the kernel of action of  $T$ on $\mathcal C$ is trivial.

 Fix in $T$ a nonidentity element $t={\rm diag}(t_1,\ldots, t_n)$.

   If $t_1=\ldots=t_n\neq 1$, then it follows from \eqref{delta} that $\mathcal C^t=\{0\}$, so $\mathcal C\setminus \mathcal C^t$ is a nonempty open subset of $\mathcal C$.

 Now let  $t_s\neq t_{s+1}$ for some  $s$, or, in other words,
 \begin{equation}\label{root}
 \alpha_s(t)\neq 1,\quad \mbox{where}\quad \alpha_s:=\varepsilon_s\varepsilon_{s+1}^{-1}.
 \end{equation}
  It follows from \eqref{delta} and the definition of $\alpha$-series that all  sequences of  forms \eqref{1}--\eqref{6} listed below are the  $\alpha_s$-series for every $p$, $q$, and $r$ satisfying the specified restrictions:
\begin{align}
& \varepsilon_s^2\varepsilon_{s+1},\; \varepsilon_s^{-1},\; \varepsilon_{s+1}^{-1},\; \varepsilon_{s+1}^{-2}\varepsilon_s;\label{1}\\
&\varepsilon_p^{-1}\varepsilon_s^{-1}\varepsilon_{s+1},\;\varepsilon_p^{-1},\;\varepsilon_j^{-1}\varepsilon_{s+1}^{-1}\varepsilon_{s},\quad p\neq s,  s+1;\label{2}\\
&\varepsilon_p^{-1}\varepsilon_q^{-1}\varepsilon_{r},\quad r\neq p, q\;\mbox{and among $p, q, r$ there is no $s$ and $s+1$};\label{3}\\
& \varepsilon_p^{-1}\varepsilon_q^{-1}\varepsilon_{s+1},\; \varepsilon_p^{-1}\varepsilon_q^{-1}\varepsilon_{s},\quad p\neq s, s+1,\;q\neq s, s+1;\label{4} \\
& \varepsilon_s^{-1}\varepsilon_q^{-1}\varepsilon_{r},\; \varepsilon_{s+1}^{-1}\varepsilon_q^{-1}\varepsilon_{r},\quad
r\neq s, s+1,\;q\neq s, s+1,\;r\neq q;\label{5}\\
& \varepsilon_s^{-2}\varepsilon_{r},\; \varepsilon_s^{-1}\varepsilon_{s+1}^{-1}\varepsilon_r,\; \varepsilon_{s+1}^{-2}\varepsilon_r, \quad r\neq s, s+1.\label{6}
 \end{align}

Since for every weight from $\Delta$ there exists only one of the  $\alpha_s$-series of the forms  \eqref{1}--\eqref{6} in which it lies, there are no other $\alpha_s$-series.
 The number of different $\alpha_s$-series of every form  \eqref{1}--\eqref{6} is given in the following table where in the top raw are listed the forms of  $\alpha_s$-series and in the bottom the specified numbers:

 {\fontsize{9pt}{6mm}\selectfont
\begin{center}
\begin{tabular}{c|c|c|c|c|c}
\eqref{1}&\eqref{2}&\eqref{3}& \eqref{4}&
\eqref{5}&
\eqref{6}
\\
\hline \hline
&&&&&\\[-13pt]
$1$& $n-2$&$(n-2)^2(n-3)/2$&$(n-1)(n-2)/2$&$(n-2)(n-3)$&$n-2$\\
\end{tabular}
\end{center}
}

  It follows from \eqref{root} and property (i) in the definition of  $\alpha$-series that $t$ can not lie in  the intersection of kernels of any two neighboring weights of every $\alpha_s$-series. Therefore, the set of weights of any $\alpha_s$-series of form \eqref{1} that contains  $t$ in its kernel is either empty, or contains only one weight,
or exactly two non-neighboring weights.\;In view of \eqref{delta} this shows that the sum of dimensions of the weight spaces of this set is not greater than  $n+1$.

Similarly, we obtain that if $\mu$ runs through all the weights of some $\alpha_s$-series of form
\eqref{2}, \eqref{3}, \eqref{4}, \eqref{5}, or \eqref{6} that contains $t$ in its kernel, then the number
 $\sum \dim\mathcal C_\mu$ is not bigger than, respectively,  $n$, $1$, $1$, $1$, or $2$ (for form \eqref{2}, we use here that $n\geqslant 2$).

It follows from this and the above table that
 if  $\mu$ runs through {\it all} $\alpha_s$-series of form  \eqref{1},
\eqref{2}, \eqref{3}, \eqref{4}, \eqref{5}, or \eqref{6} that contains $t$ in its kernel, then the number
 $\sum \dim\mathcal C_\mu$ is not greater than, respectively,
 $n+1$, $n(n-2)$, $(n-2)^2(n-3)/2$, $(n-1)(n-2)/2$, $(n-2)(n-3)$, or $2(n-2)$.

 Since $\Delta$ is the union of all $\alpha_s$-series, this shows that the inequality
 \eqref{h1} holds for
 \begin{equation}\label{hhh}
 \begin{split}
 h=n&+1+n(n-2)+
 \frac{(n-2)^2(n-3)}{2}\\
 &+\frac{(n-1)(n-2)}{2}
 +(n-2)(n-3)+2(n-2).
 \end{split}
 \end{equation}

 A simple direct calculation then shows that inequality  \eqref{h2} follows from \eqref{hhh}.\;This completes the proof of the existence of $\mathcal C_{(s)}$.

\vskip 1mm

4.  We now turn to the proof of the existence of $\mathcal C_{(u)}$.

First, we prove that the stabilizers $G_m$ of points $m$ in general position in $\mathcal C$ are finite,  or, equivalently, that the automorphism groups of the  $\boldsymbol k$-algebras $\{V, m\}$  are finite.\;Since the set of points of  $\mathcal C$, whose  $G$-stabilizer has minimal dimension, is open in   $\mathcal C$, in order
to prove the finiteness it suffices to find a single multiplication $m_0\in\mathcal C$ such that the automorphism group of the $\boldsymbol k$-algebra $\{V, m_0\}$ is finite. Note, however, that from the triviality of the automorphism group of the
$\boldsymbol k$-algebra $\{V, m_0\}$ we can {\it not} conclude that the automorphism group of the
$\boldsymbol k$-algebra $\{V, m\}$ is trivial for points $m$ in general position in $\mathcal C$,
see\;\cite[Sect.\;6.1, Example\;$1$]{PV}.

Define $m_0\in \mathcal C$ by the following multiplication table:
\begin{equation}\label{table}
e_ie_j:=\begin{cases}e_i&\mbox{if $j=i$},\\
0&\mbox{if $j\neq i$.}
\end{cases}\quad
\mbox{for all $1\leqslant i, j,\leqslant n$.}
\end{equation}
Then $A:=\{V, m_0\}$ is the direct sum of one-dimensional subalgebras:
$A=A_1\oplus \cdots\oplus A_n$, where $e_i$ is the basis of $A_i$, and
the multiplication is given by the formula $e_i^2=e_i$ showing that $e_i$ is the identity in the $\boldsymbol k$-algebra $A_i$.

All one-dimensional two-sided ideals $I$ of $A$ are
exhausted by the ideals $A_1,\ldots, A_n$.\;Indeed, let a vector
$v=\alpha_1e_1+\cdots+\alpha_ne_n$, where $\alpha_1,\ldots, \alpha_n\in\boldsymbol k$, be a basis of $I$, and let
$\alpha_p\neq 0$.\;Then $e_pv=\alpha_pe_p$ in view of
\eqref{table}. On the other hand, since $I$ is a one-dimensional two-sided ideal,  $e_pv=\lambda v$ for some $\lambda\in\boldsymbol k$.
 It then follows from $\alpha_p\neq 0$ that $\lambda\neq 0$, hence $v=\alpha_p\lambda^{-1}e_p$.\;Therefore, $I=A_p$.

Now let $\sigma$ be an automorphism of  the $\boldsymbol k$-algebra $A$.\;Then $\sigma(A_i)$
 for every $i$ is a one-dimensional two-sided ideal of  $A$, hence, as is proved, it coincides with $A_j$ for some  $j$ and the restriction of $\sigma$ to $A_i$ is an isomorphism of $\boldsymbol k$-algebras $A_i\to A_j$. Since isomorphism maps identity to identity, $\sigma(e_i)=e_j$. Thus the automorphism group of the $\boldsymbol k$-algebra $A$ leaves invariant the finite set $\{e_1,\ldots, e_n\}$. Since $e_1,\ldots, e_n$ is a basis in $V$,
its action on this set has no kernel.\;Therefore, this group is finite\footnote{Its order is $n!$ since the element of $G$ inducing a permutation of $e_1,\ldots, e_n$ is an automorphism of the $\boldsymbol k$-algebra\;$A$.}.

\vskip 1mm

5. If ${\rm char}\,\boldsymbol k=0$, then the order of every nonidentity unipotent element is infinite, therefore,
the existence of  $\mathcal C_{(u)}$ follows from the proved finiteness of the stabilizers of points in general position in $\mathcal C$.\;In the case of ${\rm char}\,\boldsymbol k>0$, the order of every unipotent element is finite, therefore, this argument is not applicable. Consider this case.

  Let $u\in G$ be a nonidentity unipotent element, let $C$ be the finite cyclic group that it generates, and let  $\boldsymbol kC$ be its group algebra over $\boldsymbol k$. The order of $C$ is a positive power ${\rm char}\,\boldsymbol k$.\;It follows from the Jordan normal form theory (see also
\cite{Hi}), that for every positive integer $d$ not bigger than $C$, there is a unique up to isomorphism indecomposable $d$-dimensional $\boldsymbol kC$-module $M_d$, and there are no other indecomposable $\boldsymbol kC$-modules.\;In particular, $I:=M_1$ is a one-dimensional trivial $\boldsymbol kC$-module.\;The said implies that  $M_d$ is isomorphic to its dual module  $M_d^*$.\;In a certain basis, the matrix of the linear operator
 $M_d\to M_d$, $m\mapsto u\cdot m$ is the Jordan block with eigenvalue $1$,
 and therefore,
\begin{equation}\label{fix}
\dim_{\boldsymbol k} M_d^C=1.
\end{equation}

If the $\boldsymbol kC$-modules $P$ and $Q$ are isomorphic, we shall write
$P\simeq Q$.\;If $N$ is a finite dimensional $\boldsymbol kC$-module, we denote by $N^{\oplus r}$
 and $N^{\otimes r}$  respectively the direct sum and the tensor product of  $r>0$ copies of the $\boldsymbol kC$-module $N$; we put   $N^{\otimes 0}:=I^{\oplus \dim_{\boldsymbol k} N}$.  By
$|N|$ we denote the number of indecomposable  $\boldsymbol kC$-submodules,
whose direct sum is $N$ (by the Krull--Schmidt theorem this number is well-defined).\;According to \cite[Lemma 2.1]{Ra},
\begin{equation}\label{min}
|M_p\otimes M_q|={\rm min}\{p, q\}\;\;\mbox{for every $p$ and $q$.}
\end{equation}
\begin{lemma}\label{j}
For every finite dimensional   $\boldsymbol kC$-modules $P$ and $Q$  and every positive integer $d$, the following inequality holds:
\begin{align}\label{1st}
|P\otimes Q|&\leqslant |P\otimes I^{\oplus \dim_{\boldsymbol k} Q}|.
\end{align}
\end{lemma}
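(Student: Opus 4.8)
The plan is to reduce both sides to explicit sums over indecomposable summands by means of the Krull--Schmidt theorem, and then to compare them termwise. The relevant operations are well behaved for this: the tensor product distributes over direct sums, and $|\,\cdot\,|$ is additive on direct sums, so once $P$ and $Q$ are written in terms of the modules $M_d$ everything reduces to bookkeeping together with the single nontrivial input \eqref{min}.

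First I would dispose of the right-hand side. Since $I=M_1$ is one-dimensional and trivial, it is the unit for the tensor product: for any $\boldsymbol kC$-module $P$ one has $P\otimes I\simeq P$. Hence $P\otimes I^{\oplus\dim_{\boldsymbol k}Q}\simeq P^{\oplus\dim_{\boldsymbol k}Q}$, and therefore
\begin{equation*}
|P\otimes I^{\oplus\dim_{\boldsymbol k}Q}|=\dim_{\boldsymbol k}(Q)\cdot|P|.
\end{equation*}
(Note that the integer $d$ appearing in the statement does not enter inequality \eqref{1st} at all.)

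Next I would expand the left-hand side. Writing $P\simeq\bigoplus_{a}M_{p_a}$ and $Q\simeq\bigoplus_{b}M_{q_b}$ as sums of indecomposables, one has $\dim_{\boldsymbol k}Q=\sum_b q_b$, while $|P|$ is the number of indices $a$. Distributing the tensor product over these decompositions and applying \eqref{min} gives
\begin{equation*}
|P\otimes Q|=\sum_{a,b}|M_{p_a}\otimes M_{q_b}|=\sum_{a,b}\min\{p_a,q_b\}.
\end{equation*}
The desired inequality now follows termwise from the trivial bound $\min\{p_a,q_b\}\leqslant q_b$: for each fixed $a$ one gets $\sum_b\min\{p_a,q_b\}\leqslant\sum_b q_b=\dim_{\boldsymbol k}Q$, and summing the resulting $|P|$ inequalities over $a$ yields $|P\otimes Q|\leqslant|P|\cdot\dim_{\boldsymbol k}Q$, which is precisely the right-hand side computed in the previous step.

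I do not expect a genuine obstacle here: the entire mathematical content is carried by the cited identity \eqref{min}, supplemented only by the fact that $I$ is the tensor unit and by the elementary estimate $\min\{p_a,q_b\}\leqslant q_b$. The sole points demanding a word of care are the implicit appeals to the Krull--Schmidt theorem (which makes $|\,\cdot\,|$ well defined and additive under $\oplus$) and to the distributivity of $\otimes$ over $\oplus$; both are standard for finitely generated $\boldsymbol kC$-modules.
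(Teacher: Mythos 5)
Your proof is correct and follows essentially the same route as the paper's: decompose $P$ and $Q$ into indecomposables by Krull--Schmidt, distribute the tensor product, and apply \eqref{min} together with the termwise bound $\min\{p_a,q_b\}\leqslant q_b$. The only cosmetic difference is that you evaluate both sides as explicit integers (in particular $|P\otimes I^{\oplus \dim_{\boldsymbol k} Q}|=\dim_{\boldsymbol k}(Q)\cdot|P|$), whereas the paper compares the two expressions summand by summand without computing them out; your side remark that the integer $d$ plays no role in \eqref{1st} is also accurate.
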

\begin{proof}[Proof of Lemma {\rm\ref{j}}] Decompose $P$ and $Q$ as the direct sums of indecomposable $\boldsymbol kC$-submodules:
\begin{equation}\label{dec}
P\simeq \textstyle\bigoplus_p M_{s_p},\quad Q\simeq \textstyle\bigoplus_q M_{r_q}.
\end{equation}
From these decompositions we obtain
\begin{align*}\label{decc}
|P\otimes Q|&\overset{\eqref{dec}}{=}|\textstyle\bigoplus_{p, q} \big(M_{s_p}\otimes M_{r_q}\big)|=\textstyle \sum_{p, q} |M_{s_p}\otimes M_{r_q}|\\[-5pt]
&\overset{\eqref{min}}{\leqslant}\textstyle \sum_{p, q} |M_{s_p}\otimes I^{\oplus {r_q}}|\!=\!
|\textstyle\bigoplus_{p, q}(M_{s_p}\otimes I^{\oplus {r_q}})|\!
\overset{\eqref{dec}}{=}\!|P\otimes I^{\oplus \dim_{\boldsymbol k} Q}|.
\end{align*}
This proves Lemma \ref{j}.\end{proof}

Since $u\in G$, and $V$ and $\mathcal C$ are the ${\boldsymbol k}G$-modules, then
 $V$ and $\mathcal C$ are also ${\boldsymbol k}C$-modules. Since $u$ is a nonidentity and the kernel of the ${\boldsymbol k}G$-module $V$ is trivial, the decomposition of the ${\boldsymbol k}C$-module $V$ as
 a direct sum of the indecomposable submodules contains at least one summand of dimension $s\geqslant 2$. Therefore, for some ${\boldsymbol k}C$-module $N$ we have
\begin{equation}\label{decV}
V\simeq M_s\oplus N, \quad s\geqslant 2.
\end{equation}
Since the  ${\boldsymbol k}G$-modules $\mathcal C$ and
$({\rm Sym}^2V^*)\otimes V$ are isomorphic, from \eqref{decV}
and the self-duality of finite dimensional ${\boldsymbol k}C$-modules
we obtain
\begin{equation}\label{CC}
\mathcal C\simeq  ({\rm Sym}^2(M_s\oplus N))\otimes (M_s\oplus N).
\end{equation}
Using that  ${\rm Sym}^2(A\oplus B)$ and $({\rm Sym}^2A)\oplus (A\otimes B)\oplus ({\rm Sym}^2B)$
are isomorphic for every modules $A$ and $B$ (see, e.g.,\;\cite[(B.2), p.\;473]{FH}),
from \eqref{CC} we obtain
\begin{equation}\label{CCC}
\begin{split}
\mathcal C\simeq\; & (({\rm Sym}^2M_s)\otimes M_s)\oplus (M_s^{\otimes 2}\otimes N)\oplus (({\rm Sym}^2N)\otimes M_s)\\
&\quad\oplus (({\rm Sym}^2M_s)\otimes N)\oplus (N^{\otimes 2}\otimes M_s)\oplus (({\rm Sym}^2N)\otimes N).
\end{split}
\end{equation}
Note also that ${\rm Sym}^2M_s\varsubsetneq M_s^{\otimes 2}$ and \eqref{min} imply the inequality
\begin{equation}\label{-1}
|{\rm Sym}^2M_s|\leqslant s-1.
\end{equation}
From Lemma \ref{j}, taking into account \eqref{-1}, \eqref{min} and the definition of $|\cdot|$, we obtain
\begin{equation}\label{CCCC}
\left.\begin{split}
|({\rm Sym}^2M_s)\otimes M_s|&\overset{\eqref{1st}}{\leqslant}|({\rm Sym}^2M_s)\otimes I^{\oplus s}|\overset{\eqref{-1}}{\leqslant} s(s-1),\\
|M_s^{\otimes 2}\otimes N|&\overset{\eqref{1st}}{\leqslant}
|M_s^{\otimes 2}\otimes I^{\oplus (n-s)}|\overset{\eqref{min}}{=}s(n-s),\\
|({\rm Sym}^2N)\!\otimes\! M_s|&\!\overset{\eqref{1st}}{\leqslant}\!
\big|M_s\!\otimes\! I^{\oplus \frac{(n-s+1)(n-s)}{2}}\big|\!=\!\frac{(n\!-\!s\!+\!1)(n\!-\!s)}{2},\\
|({\rm Sym}^2M_s)\otimes N|&\overset{\eqref{1st}}{\leqslant}
|({\rm Sym}^2M_s)\otimes I^{\oplus (n-s)}|\!\overset{\eqref{-1}}{\leqslant}\!(n\!-\!s)(s\!-\!1),\\
|N^{\otimes 2}\otimes M_s|&\overset{\eqref{1st}}{\leqslant}
|M_s\otimes I^{\oplus (n-s)^2}|=(n-s)^2,\\
|({\rm Sym}^2N)\otimes N|&\hskip 2mm{\leqslant}\dim(({\rm Sym}^2N)\otimes N)\!=\!\frac{(n\!-\!s\!+\!1)(n\!-\!s)^2}{2}.
\end{split}\right\}
\end{equation}
It follows from \eqref{CCC} and \eqref{CCCC} that
\begin{equation}\label{di}
\begin{split}
|\mathcal C|\leqslant s(s-1)&+s(n-s)+\frac{(n-s+1)(n-s)}{2}+(n-s)(s-1)\\[-2pt]
&+(n-s)^2+\frac{(n-s+1)(n-s)^2}{2}.
\end{split}
\end{equation}

  But \eqref{fix} implies that for every finite dimensional $\boldsymbol k C$-module $N$ the equality $N^C=|N|$ holds.\;Therefore, \eqref{di} means that $\dim_{\boldsymbol k}\mathcal C^u$ is not bigger than the right-hand side of inequality \eqref{di}.\;As is known, dimension of the centralizer of the element $u$ in $G$ is not smaller than the rank of the group $G$ that is equal to $n$ (see, e.g.,\;\cite[Prop.\,1,\;p.\,94]{St}).\;Since this centralizer lies in  $N_G(\mathcal C^u)$, it now follows from Lemma \ref{tran} that dimension of the closure $\overline{G\cdot \mathcal C^u}$ of the set $G\cdot \mathcal C^u$ in $\mathcal C$ is not bigger than the number
  \begin{equation}\label{numb}
\begin{split}
s(s-1)&+s(n-s)+\frac{(n-s+1)(n-s)}{2}+(n-s)(s-1)\\[-2pt]
&+(n-s)^2+\frac{(n-s+1)(n-s)^2}{2}+n^2-n.
\end{split}
\end{equation}

Twice the difference between $\dim\mathcal C=(n^3+n^2)/2$ and number \eqref{numb} is equal to
\begin{equation}\label{dif}
n^2(3s-5)+n(-3s^2+4s+3)+(s^3-2s^2+s).
\end{equation}
Consider \eqref{dif} is the polynomial in $n$.\;Its leading coefficient is positive for every
$s\geqslant 2$, and the discriminant is equal to
$-3s^3(s-20/3)-54s(s-22/27)+9$, which shows that the latter is negative
for every $s\geqslant 7$. Direct calculation shows that it is negative also for $s=2, 3, 4, 5, 6$.
Therefore, for every $s\geqslant 2$ the specified difference is positive.

Hence $\mathcal C\setminus\overline{G\cdot \mathcal C^u}$ is a nonempty open subset of $\mathcal C$ such that the $G$-stabilizers of its points do not contain elements conjugate to $u$.\;But there are only finitely many conjugacy classes of unipotent elements in $G$ in view of the finiteness of the set of possible Jordan normal forms of such elements (see also \cite{St}).\;Therefore, if $u$ runs through the set of representatives of nonidentity conjugacy classes of unipotent elements in  $G$, then the intersection of the open subsets
$\mathcal C\setminus\overline{G\cdot \mathcal C^u}$ is a nonempty open subsubset of $\mathcal C$.\;The said implies that this subset may be taken as\;$\mathcal C_{(u)}$.

This completes the proof of Theorem {\rm \ref{C}}.
\end{proof}

\begin{remarknonumber} Actually, Parts 2 and 3 of the proof give more than we used. Namely,
the arguments used there show that for every constant $\alpha\in \boldsymbol k$ and every element $t\in T$
 which does not lie in the center of the group $G$ (i.e., which is not scalar), inequalitites  \eqref{h1} and \eqref{h2}, where $h$ is given by formula \eqref{hhh}, still hold if  $\mathcal C^t$ in \eqref{h1} is replaced by $\{m\in \mathcal C\mid t\cdot m=\alpha m\}$.\;This means that we have proved the following
\begin{theorem}
For the natural action of the group $G$ on the projectivization $P\mathcal C$ of the vector space
$\mathcal C$, the stabilizers of points in general position in $P\mathcal C$ coincide with the center of the group $G$.
\end{theorem}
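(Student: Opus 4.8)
The plan is to reduce the projectivized statement to the affine computations already carried out in Parts 2 and 3 of the proof of Theorem~\ref{C}, together with the remark preceding the theorem. The center $Z$ of $G=\mathrm{GL}_n(\boldsymbol k)$ consists of the scalar matrices, and these act on the projectivization $P\mathcal C$ trivially; hence $Z$ is contained in the stabilizer $G_{[m]}$ of \emph{every} point $[m]\in P\mathcal C$. So the content of the theorem is the reverse inclusion for points in general position, namely that for $[m]$ in a suitable nonempty open subset of $P\mathcal C$ one has $G_{[m]}\subseteq Z$, i.e.\ the only elements $g\in G$ with $g\cdot m\in\boldsymbol k^* m$ are the scalars.

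The key reduction is the observation recorded in the remark: for every element $t\in T$ that is not scalar (not in the center) and every scalar $\alpha\in\boldsymbol k$, the eigenspace $\{m\in\mathcal C\mid t\cdot m=\alpha m\}$ still satisfies the dimension bound \eqref{h1} with $h$ given by \eqref{hhh}, and \eqref{h2} then forces the closure of $G\cdot\{m\in\mathcal C\mid t\cdot m=\alpha m\}$ to be a proper subvariety of $\mathcal C$. First I would pass from the semisimple case to the general case exactly as in Part~2: any $g\in G$ fixing the line $\boldsymbol k m$ scales $m$ by some $\alpha$, and after Jordan decomposition $g=g_s g_u$, if $g$ is not scalar then its semisimple part $g_s$ is either non-central or, when $g_s$ is scalar, the unipotent part $g_u$ is nontrivial. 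Conjugating $g_s$ into $T$ reduces the semisimple contribution to the union, over the finitely many non-central $t\in T$ (equivalently, over the finitely many eigenspaces, since $\mathcal C^{t,\alpha}$ runs through finitely many $T$-stable subspaces as in \eqref{www}) and over the eigenvalue $\alpha$, of the proper closed sets $\overline{G\cdot\mathcal C^{t,\alpha}}$. The complement is a nonempty $G$-stable open subset $\mathcal C^{\circ}$, and for $m\in\mathcal C^\circ$ no non-central semisimple element can scale $m$.

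For the unipotent contribution I would reuse the computation of Part~4 verbatim: for a nonidentity unipotent $u$, the fixed space $\mathcal C^u$ has dimension bounded by \eqref{numb}, whose $G$-saturation is again a proper subvariety, so excluding the finitely many unipotent conjugacy classes cuts out another nonempty open subset. Here I would only need to check that scaling by $\alpha$ does not help a unipotent element: a unipotent $g_u$ has all eigenvalues $1$, so $g_u\cdot m=\alpha m$ with $m\neq 0$ forces $\alpha=1$, reducing to the genuine fixed-point analysis of $\mathcal C^u$ already done. Intersecting the semisimple-exclusion open set, the unipotent-exclusion open set, and the complement of $\{0\}$, and descending to $P\mathcal C$, yields a nonempty $G$-stable open subset of $P\mathcal C$ on which $G_{[m]}=Z$.

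The main obstacle is the bookkeeping in combining the semisimple and unipotent parts of a general $g$ fixing the line. One must verify that if $g\cdot m\in\boldsymbol k^* m$ then \emph{both} $g_s\cdot m$ and $g_u\cdot m$ lie on the line $\boldsymbol k m$ (this is immediate since $g_s,g_u$ are polynomials in $g$ and hence preserve every $g$-stable line), and then argue that outside the union of the exceptional closed sets neither a non-central $g_s$ nor a nontrivial $g_u$ can occur, so $g_s\in Z$ and $g_u=1$, whence $g\in Z$. The dimension estimates themselves require no new work, as they are precisely those of Parts 2--4 with $\mathcal C^t$ replaced by the eigenspace $\mathcal C^{t,\alpha}$; the remark asserts that the bound \eqref{h1}--\eqref{h2} survives this replacement, which is the only nontrivial input beyond what was already proved.
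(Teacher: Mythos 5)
Your proposal is correct and takes essentially the same route as the paper: the paper's remark obtains the theorem from the eigenspace generalization of Parts 2--3 (non-scalar semisimple elements cannot scale a generic multiplication) combined, via Jordan decomposition, with the unipotent analysis of Parts 4--5 used unchanged, exactly as you do. Your write-up only makes explicit the bookkeeping the paper leaves implicit --- that a unipotent element stabilizing a line fixes it pointwise, that the semisimple and unipotent parts preserve every $g$-stable line, and that the eigenspaces $\{m\in\mathcal C\mid t\cdot m=\alpha m\}$ run over finitely many sums of weight subspaces.
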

\end{remarknonumber}

\begin{proof}[Proof of Theorem {\rm\ref{M}}] \

The claim of simplicity is proved in \cite[Thm.\;4]{Po},
 and that of triviality follows from Theorem
 \ref{C} in view of the first equality in \eqref{decomp}.
 \end{proof}

\section{Proofs of Lemma \ref{123}, Theorems \ref{rat}, \ref{section}, {\rm \ref{sectionC}}, and Corollary \ref{sr}}


The proof of Lemma \ref{123} is based on one frequently used statement known as the No-Name Lemma. We recall its formulation.

 Let $\pi\colon E\to X$ be an algebraic vector bundle over a variety $X$. Assume that
  $E$ and $X$ are endowed with the actions of an affine algebraic group $G$ such that $\pi$ is the $G$-equivariant morphism and for every elements $g\in G$, $x\in X$ the map of fibers $\pi^{-1}(x)\to \pi^{-1}(g\cdot x)$ defined by the transformation $g$ is $\boldsymbol k$-linear.

 \begin{lemma}[{{\rm No-Name Lemma}}] \label{Nnl} Using the above notation, assume that the action of $G$ on $X$ is locally free and put
 $d=\dim E-\dim X$. Consider the action of  $G$ on $X\times\bAd$ via the first factor and let $\pi_1\colon X\times \bAd\to X$ be the natural projection.\;Then there exists a $G$-equivariant birational isomorphism  $\varphi\colon E\dashrightarrow X\times\bAd$ such that the following dia\-gram is commutative
\begin{equation*}
\begin{matrix}
\xymatrix@=6mm{E\ar[dr]_{\pi}\ar@{-->}[rr]^{\varphi}&&X\times\bAd\ar[dl]^{\pi_1}\\
&X&
}
\end{matrix}.
\end{equation*}
 \end{lemma}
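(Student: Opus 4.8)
The plan is to reduce the $G$-equivariant trivialization of $E$ to the ordinary generic triviality of a vector bundle on the rational quotient $X\dss G$, using equivariant descent along the generic torsor structure provided by the local freeness hypothesis.

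First I would pass to a $G$-stable dense open subset $X_0\subseteq X$ on which the action is free and the orbit morphisms are isomorphisms, which is exactly what local freeness provides. Restricting the rational quotient $\pi_{G,X}$ then yields a morphism $q\colon X_0\to B_0$ onto a dense open $B_0\subseteq X\dss G$ which is a principal $G$-bundle. The separability clause in the definition of local freeness is precisely what makes $q$ a torsor in the fppf (equivalently \'etale) sense even when ${\rm char}\,\boldsymbol k>0$; in characteristic $0$ this is automatic.

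Next I would invoke equivariant descent along $q$. The restriction $E|_{X_0}$ is a $G$-equivariant vector bundle over the torsor $q$, and by hypothesis $G$ acts $\boldsymbol k$-linearly on the fibers; hence $E|_{X_0}$ is the pullback $q^{*}\overline{E}$ of a vector bundle $\overline{E}$ on $B_0$, the equivariant structure on $E|_{X_0}$ being the canonical one on such a pullback. The rank of $\overline{E}$ equals the fiber dimension $d=\dim E-\dim X$. Since $\overline{E}$ is an algebraic vector bundle, it is Zariski-locally trivial, so after shrinking to a dense open $B_1\subseteq B_0$ we get a trivialization $\overline{E}|_{B_1}\cong B_1\times\bAd$. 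Pulling this back along $q$ over $X_1:=q^{-1}(B_1)$ produces a $G$-equivariant isomorphism $E|_{X_1}\cong X_1\times\bAd$ commuting with the projections to $X_1$, where $G$ acts on the first factor and trivially on $\bAd$. This is exactly the desired birational isomorphism $\varphi\colon E\dashrightarrow X\times\bAd$ fitting into the stated commutative triangle.

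The main obstacle is making the descent step rigorous, namely verifying that the generic quotient $q$ is genuinely a torsor to which faithfully flat descent of equivariant vector bundles applies. This is where the separability half of the local-freeness hypothesis is essential, since in positive characteristic trivial stabilizers alone do not guarantee that $q$ is smooth. Once the torsor structure is established, the descent statement and the generic triviality of ordinary vector bundles are standard, and the remaining verifications are routine.
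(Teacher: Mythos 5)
Your proof is correct, but note that the paper does not actually prove Lemma \ref{Nnl}: its proof consists of two citations, to Speiser's lemma \cite{Sp} when $G$ is finite and to Reichstein--Vistoli \cite{RV} in the general case. Your descent argument is essentially the content of the latter reference, so you have supplied the argument the paper delegates. Each of your steps is sound: local freeness gives a $G$-stable dense open $X_0$ with trivial stabilizers and separable orbit maps; the $G$-linearization of $E$ is precisely a descent datum for the torsor $q\colon X_0\to B_0$ via the isomorphism $G\times X_0\simeq X_0\times_{B_0}X_0$, so faithfully flat descent of quasi-coherent sheaves yields a bundle $\overline{E}$ on $B_0$ of rank $d$ with $q^*\overline{E}\simeq E|_{X_0}$; a vector bundle is free at the generic point, hence trivial over a dense open $B_1\subseteq B_0$; and pulling that trivialization back gives the $G$-equivariant isomorphism over $q^{-1}(B_1)$ commuting with the projections, i.e., the desired birational $\varphi$ over $X$. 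The one step you rightly flag as the main obstacle --- that trivial stabilizers together with separability make $q$ generically a torsor in positive characteristic --- is exactly \cite[Remark\;4]{RV}, which the paper itself invokes for the same purpose in its proof of Theorem \ref{SS}; appealing to it is therefore legitimate within the paper's framework, though a fully self-contained treatment would have to reproduce that argument (generic flatness of $q$ plus showing the bijection $G\times X_0\to X_0\times_{B_0}X_0$ is an isomorphism, which is where separability enters). What your route buys over the paper's bare citation is a uniform, characteristic-free proof with no case split: the finite-group case handled classically by Speiser's semilinear Hilbert 90 is subsumed by the descent argument.
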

\begin{proof} If the group $G$ is finite, it is the classical ``Speiser Lemma''\;\cite{Sp};
in the general case, the proof, valid in arbitrary characte\-ristic, see, e.g., in \cite{RV}.
\end{proof}

From now on we use the following notation.\;If  $X$ and $Y$ are the irreducible varieties, then $X\approx Y$ means that  $X$ and $Y$ are birationally isomorphic; if an algebraic group $H$ acts on  $X$ and $Y$, then $X\overset{H}{\approx} Y$ means that there is an $H$-equivariant birational isomorphism between $X$ and $Y$.

 \begin{proof}[Proof of Lemma {\rm \ref{123}}] \

 Put $d_i:=\dim L_i$.
 The natural projection $L_2\oplus L_3\to L_2$ is a vector bundle over $L_2$, to which, in view of
condition   (i), is applicable Lemma  \ref{Nnl}.\;It implies that
\begin{equation}\label{23}
L_2\oplus L_3 \overset{H}{\approx} L_2\times {\mathbf A}\!^{d_3},
\end{equation}
where $H$ acts on $L_2\times {\mathbf A}\!^{d_3}$ via the first factor.\;Similarly, applying Lemma
 \ref{Nnl} to the natural projections $L_1\oplus L_2\to L_1$ and $L_1\oplus L_2\to L_2$
and considering the actions of $H$ on $L_1\times {\mathbf A}\!^{d_2}$ and $L_2\times {\mathbf A}\!^{d_1}$ via the first factors, we obtain
\begin{equation}\label{12}
L_1\times {\mathbf A}\!^{d_2} \overset{H}{\approx} L_1\oplus L_2 \overset{H}{\approx} L_2\times {\mathbf A}\!^{d_1}.
\end{equation}

It follows from \eqref{23} and condition (iii) that
\begin{equation}\label{q1}
(L_2\oplus L_3)\dss H\approx (L_2\dss H)\times  {\mathbf A}\!^{d_3}= (L_2\dss H)\times  {\mathbf A}\!^{d_1}\times {\mathbf A}\!^{d_3-d_1},
\end{equation}
and from \eqref{12} that
\begin{equation}\label{q2}
 (L_1\dss H)\times  {\mathbf A}\!^{d_2} \approx (L_1\oplus L_2)\dss H\approx (L_2\dss H)\times  {\mathbf A}\!^{d_1}.
\end{equation}
In view of condition (ii), we have  $(L_1\dss H)\approx {\mathbf A}\!^{\dim L_1\dss H}$,
from where, in view of
 \eqref{q1} and \eqref{q2}, we obtain
$(L_2\oplus L_3)\dss H\approx {\mathbf A}\!^{d_2+d_3-d_1+\dim L_1\dss H}$.
This completes the proof.
\end{proof}

\begin{proof}[\it Proof of Theorem {\rm \ref{rat}}] \

 The claim is clear for $n=1$, so further we assume that $n\geqslant 2$.

   Let $V^{\oplus n}$ be the direct sum of  $n$ copies of the space $V$.\;One of the orbits of the diagonal action of  $G$ on $V^{\oplus n}$ is open in $V^{\oplus n}$, therefore, ${\boldsymbol k}(V^{\oplus n})^G={\boldsymbol k}$.\;It is not difficult to see that this action is locally free.\;It follows from this, \eqref{dimA}, and Theorem  \ref{C} that for
 $H=G$, $L_1=V^{\oplus n}$, $L_2=\mathcal C$, $L_3=\mathcal A$, and $n\geqslant 3$
 the conditions of Lemma \ref{123} hold (for $n=2$ condition (iii) of this lemma does not hold).\;Hence for $n\geqslant 3$ the claim  that we are proving follows from
 \eqref{decomp} and Lemma \ref{123}.

  All multiplications $m$ from the $G$-module $\mathcal C$ (respectively, $\mathcal A$) such that, for every element $v\in \{V, m\}$, the linear operator $V\to V$, $a\mapsto va$ has zero trace, constitute a submodule $\mathcal C_0$ (respectively, $\mathcal A_0$).\;Besides, every element $\ell\in V^*$ defines the multiplications $m_{\ell +}\in\mathcal C$, $m_{\ell -}\in\mathcal A$, for which  the products of elements $a, b\in V$ are given, respectively, by the formulas
 \begin{align}
 ab&:=\ell(a)b+\ell(b)a,\label{+}\\
 ab&:=\ell(a)b-\ell(b)a.\label{-}
 \end{align}
The subsets $\mathcal C':=\{m_{\ell +}\mid \ell\in V^*\}$ and $\mathcal A':=\{m_{\ell -}\mid \ell\in V^*\}$
also are the submodules of, respectively, the $G$-modules $\mathcal C$ and $\mathcal A$.\;Both of these submodules are isomorphic to the $G$-module $V^*$.\;For every of the $\boldsymbol k$-algebras $\{V, m_{\ell +}\}$ and $\{V, m_{\ell +}\}$, the trace of the operator of left multiplication by an element $v\in V$ is equal to $n\ell(v)$.
This yields
\begin{equation}\label{chara}
\mathcal C=\mathcal C_0\oplus \mathcal C'\;\;\mbox{and}\;\;\mathcal A=\mathcal A_0\oplus \mathcal A'\;\;\mbox{if ${\rm char}\,\boldsymbol k$ does not divide  $n$.}
\end{equation}

 Now let $n=2$. Then  $\mathcal A=\mathcal A'$ in view of \eqref{dimA}, and it follows from \eqref{decomp},  \eqref{chara}, and the condition ${\rm char}\,{\boldsymbol k}\neq 2$ that
\begin{equation}\label{n2}
\mathcal M=\mathcal C_0\oplus \mathcal C'\oplus \mathcal A'.
\end{equation}
 Since one of the $G$-orbits is open in  $V^*\oplus V^*$, we have $\boldsymbol k(V^*\oplus V^*)^G=\boldsymbol k$. It is not difficult to see that the action of  the group $G$ on $V^*\oplus V^*$ is locally free.\;This implies that for $H=G$, $L_1=L_2=\mathcal C'\oplus\mathcal A'$, and $L_3=\mathcal C_0$ the conditions of Lemma \ref{123} hold. Therefore, for $n=2$, the claim under the proof follows from this lemma and  \eqref{n2}.
\end{proof}

The proofs of
 Theorems \ref{section} and \ref{sectionC} are based on the
 following general statement (Theorem \ref{SS} below) about {\it special}  algebraic groups
 in the sense of Serre.\;Recall from \cite[Sect.\;4.1]{Se} that these are algebraic groups $S$ such that every $S$-torsor is trivial in the Zariski topology (or, equivalently, for every field extension $K/\boldsymbol k$ the Galois cohomology $H^1(K, S)$ is trivial).
\begin{theorem}\label{SS}
 Let $X$ be an irreducible variety endowed with a locally free action of a special algebraic group $S$.\;Then there exists an irreducible closed subset $Z$ of $X$ such that:
\begin{enumerate}[\hskip 2.2mm\rm(i)]
\item the map $\boldsymbol k(\mathcal M)^G\to \boldsymbol k(\mathcal S)$, $f\mapsto f|_{\mathcal S}$ is well-defined and is a $\boldsymbol k$-isomorphism of fields;
    \item
for every point  $x$ of an open dense subset of $X$, there exists a unique element $s\!\in\! S$ such that
$s\cdot x\!\in\!\mathcal S$.
\end{enumerate}
\end{theorem}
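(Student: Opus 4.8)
The plan is to realize $Z$ as the closure of the image of a rational section of the rational quotient map, whose existence is forced by the specialness of $S$. First I would form the rational quotient $\pi_{S,X}\colon X\dashrightarrow X\dss S$, so that $\boldsymbol k(X\dss S)=\boldsymbol k(X)^S$ and $\pi_{S,X}$ is dominant. Since the action is locally free, for points $x$ in general position the orbit morphism $S\to S\cdot x$ is an isomorphism; in particular the generic orbit has dimension $\dim S$, and because the transcendence degree of $\boldsymbol k(X)$ over $\boldsymbol k(X)^S$ equals the maximal orbit dimension, the generic fibre of $\pi_{S,X}$ has dimension $\dim S$. By Rosenlicht's theorem \cite[Thm.\,2]{Ros} the rational quotient separates orbits in general position, so after shrinking $X$ to a dense open $S$-invariant subset the generic fibre of $\pi_{S,X}$ is a single orbit. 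Viewing this generic fibre as a scheme over the field $K:=\boldsymbol k(X\dss S)$, local freeness makes it a torsor under the group $S_K$ obtained from $S$ by the base change $\boldsymbol k\hookrightarrow K$.

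Next I would invoke that $S$ is special: by definition every $S$-torsor over every extension field of $\boldsymbol k$ is trivial, so $H^1(K,S)=1$, and the generic fibre, being an $S_K$-torsor, has a $K$-rational point. Unwinding the definitions, such a point is exactly a rational section $\sigma\colon X\dss S\dashrightarrow X$ of $\pi_{S,X}$, i.e.\ a rational map with $\pi_{S,X}\circ\sigma=\mathrm{id}$. I then set $Z$ to be the closure in $X$ of the image of $\sigma$; it is irreducible because $X\dss S$ is.

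For (i), since $\sigma$ is a section, $\pi_{S,X}$ restricts on $Z$ to a birational inverse of $\sigma$, so $\sigma$ is a birational isomorphism $X\dss S\approx Z$ and induces a $\boldsymbol k$-isomorphism $\boldsymbol k(X\dss S)\xrightarrow{\ \sim\ }\boldsymbol k(Z)$. Writing an invariant $f\in\boldsymbol k(X)^S$ as $f=\bar f\circ\pi_{S,X}$ with $\bar f\in\boldsymbol k(X\dss S)$, one checks that $Z$ is not contained in the polar locus of $f$ and that $f|_Z\circ\sigma=\bar f$, so the restriction map $f\mapsto f|_Z$ is precisely this isomorphism, proving (i). For (ii), over a point $y$ in general position of $X\dss S$ the fibre $\pi_{S,X}^{-1}(y)$ is a single orbit, while, $\pi_{S,X}|_Z$ being birational onto $X\dss S$, its fibre over $y$ is the single point $\sigma(y)$; hence $Z$ meets the generic orbit in exactly one point. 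For $x$ in general position, put $y=\pi_{S,X}(x)$: the unique point of $Z$ lying in the orbit of $x$ is $\sigma(y)$, and by local freeness the orbit map $S\to S\cdot x$ is an isomorphism, so there is a unique $s\in S$ with $s\cdot x=\sigma(y)\in Z$, which is exactly (ii).

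The step I expect to require the most care is establishing that the generic fibre of $\pi_{S,X}$ is genuinely a torsor under $S_K$, rather than merely a homogeneous space up to purely inseparable phenomena. This is precisely where the full strength of local freeness is needed: in positive characteristic the separability clause (a) in the definition of a locally free action guarantees that the orbit morphism is an \emph{isomorphism}, not just a bijective purely inseparable morphism, so that the generic fibre is a genuine form of $S$ in the faithfully flat topology and its triviality is correctly detected by the vanishing of $H^1(K,S)$. Making the passage from this geometric statement to the cohomological one precise over the non-closed field $K$ is the technical heart of the argument; the remaining verifications are formal consequences of $\sigma$ being a rational section.
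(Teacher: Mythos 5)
Your proposal is correct and follows essentially the same route as the paper: rational quotient, Rosenlicht's theorem to make the generic fibres orbits, local freeness to upgrade the generic fibre to a genuine $S$-torsor (the step you rightly flag as the technical heart, which the paper handles by citing \cite[Remark 4]{RV}), specialness to produce a rational section $\sigma$, and $Z$ taken to be the closure of the image of $\sigma$. The only cosmetic difference is that you invoke specialness through the vanishing of $H^1(K,S)$ at the generic point, while the paper uses Zariski-local triviality of the torsor over an open part of the quotient; these are the two equivalent formulations of specialness recalled in the paper itself.
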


\begin{proof} Consider a rational quotient  $\pi_{S, X}\colon X\dashrightarrow X\dss S$. By Rosenlicht's theorem  \cite[Thm.\,2]{Ros} there exists an $S$-invariant open subset $U$ of $X$ lying in the domain of definition of $\pi_{S, X}$ such that $W:=\pi_{S, X}(U)$ is open in $\mathcal X\dss S$ and every fiber of the morphism
\begin{equation}
\label{torsor}
\pi_{S, X}|_U\colon U\to W
\end{equation}
is an $S$-orbit.
Since the action is locally free, replacing $U$ by a suitable invariant open subset, we may assume that
for every point $x\in U$, the orbit map $S\to S\cdot x$, $s\mapsto s\cdot x$ is an isomorphism.\;In turn, this implies
(see\;\cite[Remark\;4]{RV}) that making another such a replacement, we may assume that \eqref{torsor} is a torsor.\;It then follows from the specialness of the group $G$ that morphism \eqref{torsor}, and hence the rational map $\pi_{S, X}$, admit a rational section
$\sigma\colon X\dss S\dashrightarrow X$ (i.e.,\;$\pi\circ\sigma={\rm id}$).\;This implies that one can take the closure  of the set $\sigma(X\dss S)$ in $X$ as $Z$ from the formulation of Theorem\;\ref{section}.
\end{proof}

In the proof of Theorems {\rm \ref{section}} and {\rm \ref{sectionC}} we shall consider the Lie algebra
 $\mathfrak g$ of the group $G$. It is the Lie algebra
 $\mathfrak{gl}(V)$ of all $\boldsymbol k$-linear operators on
$V$, see\;\cite[I.3.1, Example\;(2)]{Bor}.\;For the representation of $G$ on $V$ under consideration, the differential at the identity of $G$
is the identity representation of the Lie algebra
$\mathfrak{gl}(V)$ on $V$.

\begin{proof}[\it Proofs of Theorems {\rm \ref{section}} and {\rm \ref{sectionC}}]\

We shall prove that the actions of the group $G$ on $\mathcal M$ and $\mathcal C$ are locally free.\;Since the group $G$
is special (see\;\cite[\S4]{Se}), Theorems
 {\rm \ref{section}} and\;{\rm \ref{sectionC}} will then follow from Theorem \ref{SS}.

In view of Theorem\;\ref{C} (respectively, Theorem\;\ref{M}), for the points $m$ in general position if
$\,\mathcal C$ (respectively, in $\mathcal M$)  the stabilizers  $G_m$  are trivial.\;Therefore,
the proof of local freeness of the actions under consideration reduces to proving that for such $m$
the orbit morphism $G\to G\cdot m,\;g\mapsto g\cdot m$ is separable.
Its separability is equivalent to surjectivity of the differential of this morphism at the identity,
see\,\cite[AG.17.3, II.6.7]{Bor}.\;Since the image of this differential is the linear subspace
$\mathfrak g\cdot m$ of $\,\mathcal C$ (respectively, of $\mathcal M$), and the orbit $G\cdot m$
is a $\dim G$-dimensional (because of triviality of  $G_m$)
smooth variety, the problem reduces to proving the equality
$\dim \mathfrak g\cdot m=\dim G$ for the specified $m$.

We now observe that, for every
 $d\in \mathbb Z$, the  (possibly, empty) subset
 $\{m\in \mathcal M\mid \dim \mathfrak g\cdot m\geqslant d\}$  of $\mathcal M$
 is open.\;This follows from the fact that, in the coordinate form, the condition of belonging of a point
  $m$ to the specified subset is expressed as nonvanishing of at least one of the minors of a fixed size of the corresponding matrix.

The said reduces the problem to proving
the existence of a {\it single} point
$c\in \mathcal C$ such that
\begin{equation}\label{aim}
\dim \mathfrak g\cdot c=\dim G
\end{equation}
(note that in this case the stabilizer
$G_c$ is finite, but not necessarily trivial). We now show that it is possible to take
\begin{equation}\label{ide}
c:=\ell^1\otimes\ell^1\otimes e_1+\cdots+\ell^n\otimes\ell^n\otimes e_n
\end{equation}
(see the notation introduced in the beginning of Section 2). Note that $\{V, c\}$ is the direct sum of the one-dimensional algebras with nonzero multiplication considered above in the proof of
the claim of triviality in Theorem \ref{C} (where the notation $m_0$ is used in place of $c$).

For every two positive integers $i$ and $j$ not exceeding $n$, denote by
$x_{i,j}$ the element of $\mathfrak g$ defined by the formula
\begin{equation}\label{xij}
x_{i,j}\cdot e_s= \left\{\!\!\begin{array}{ll}
e_j,&\mbox{если $s=i$},\\
0,&\mbox{если $s\neq i$.}
\end{array}\right.
\end{equation}
 It follows from \eqref{xij} and the definition of the representation of
  $\mathfrak g$ on $V^*$ that
\begin{equation}\label{xij*}
x_{i,j}\cdot \ell_s= \left\{\!\!\begin{array}{ll}
-\ell_i,&\mbox{если $s=j$},\\
0,&\mbox{если $s\neq j$.}
\end{array}\right.
\end{equation}
After simple calculations based on  \eqref{ide}, \eqref{xij} и \eqref{xij*} we obtain
\begin{equation}\label{final}
x_{ij}\cdot c= \ell_i\otimes\ell_i\otimes e_j-\ell_i\otimes\ell_j\otimes e_j-\ell_j\otimes\ell_i\otimes e_j.
\end{equation}

 It follows from \eqref{final} that all the vectors $x_{ij}\cdot c$ are linearly independent over $\boldsymbol k$. Since all the elements $x_{i,j}$ are also linearly independent over $\boldsymbol k$ and their linear span over  $\boldsymbol k$ is $\mathfrak g$, this proves equality \eqref{aim} and completes the proofs of Theorems \ref{section} and \ref{sectionC}.
\end{proof}

\begin{proof}[Proof of Corollary {\rm \ref{sr}}] \

This follows from the known fact (see, e.g.,\;\cite[Cor.\;2(i)]{Po2}) that if $X$ in Theorem \ref{SS}
is an affine space and the action is linear, then the field ${\boldsymbol k}(X)^S$ is stably rational
over $\boldsymbol k$ (indeed, $X$ is birationally isomorphic to $G\times (X\dss G)$ by Theorem  \ref{SS}, therefore, the claim on stable rationality follows from rationality of the underlying variety of $G$).
\end{proof}

\end{document}